\title[Existence of Infinitely Many Minimal Hypersurfaces]{Existence of Infinitely Many Minimal Hypersurfaces in Higher-dimensional Closed Manifolds with Generic Metrics}
\author{Yangyang Li}
\address{Department of Mathematics, Princeton University, Princeton, NJ 08544}
\email{yl15@math.princeton.edu}
\thanks{The author is partially supported by NSF-DMS-1811840.}
\newtheorem{definition}{Definition}[section]
\newtheorem{proposition}{Proposition}[section]
\newtheorem{corollary}{Corollary}[section]
\newtheorem{theorem}{Theorem}[section]
\newtheorem{remark}{Remark}[section]
\newtheorem{lemma}{Lemma}[section]
\newtheorem{conjecture}{Conjecture}[section]
\DeclareRobustCommand*{\myhl}[1]{%
    #1}
\newcommand{\set}[1]{\{#1\}}
\begin{document}
\bibliographystyle{abbrvalpha}
\begin{abstract}
    In this paper, we show that a closed manifold $M^{n+1} (n \geq 7)$ endowed with a $C^\infty$-generic (Baire sense) metric contains infinitely many singular minimal hypersurfaces with optimal regularity. Moreover, for $2 \leq n \leq 6$, our argument also implies the denseness of the minimal hypersurfaces realizing min-max widths for generic metrics. This partially supports equidistribution of the minimal hypersurfaces realizing min-max widths conjectured by F.C. Marques, A. Neves and A. Song in \cite{marques_equidistribution_2019}.
\end{abstract}

\maketitle
\section{Introduction}

    In Riemannian geometry, the existence and regularity of minimal hypersurfaces is one of the central problems. In 1982, motivated by the existence results in $(n+1)$-dimensional closed manifolds by G.D. Birkhoff (\cite{birkhoff_dynamical_1917}, $n=1$), J. Pitts (\cite{pitts_existence_1981}, $2\leq n \leq 5$) and R. Schoen and L. Simon (\cite{schoen_regularity_1981}, $n \geq 6$), S.-T. Yau proposed the conjecture of existence of infinitely many minimal surfaces in \myhl{closed $3$-dimensional Riemannian manifolds.}

    \begin{conjecture}[Yau's conjecture, \cite{10.2307/j.ctt1bd6kkq.37}]
        Any closed three-dimensional manifold must contain an infinite number of immersed minimal surfaces.
    \end{conjecture}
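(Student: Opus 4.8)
The plan is to prove the stronger assertion that every closed Riemannian three-manifold $(M^3,g)$ contains infinitely many connected closed \emph{embedded} minimal surfaces, which a fortiori settles the conjecture. The engine is the Almgren--Pitts min-max theory in the form refined by Marques and Neves: to the $p$-sweepouts of $M$ one attaches the \emph{volume spectrum} $\{\omega_p(M,g)\}_{p\geq 1}$, a nondecreasing sequence of ``$p$-widths'', and the regularity theory of Pitts and Schoen--Simon (with no singularities here, since $n=2$) guarantees that every $\omega_p$ is realized as the area, counted with positive integer multiplicities, of a closed embedded minimal surface in $M$. Two facts about this spectrum are used repeatedly: the Lusternik--Schnirelmann inequality of Marques--Neves, namely that $\omega_p(M,g)=\omega_{p+1}(M,g)$ already forces $M$ to contain infinitely many minimal surfaces; and the Weyl law of Liokumovich--Marques--Neves, $\omega_p(M,g)\sim a(2)\,\mathrm{Vol}(M,g)^{2/3}\,p^{1/3}$, which in particular gives the sublinear growth $\omega_p(M,g)=o(p)$.

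I would argue by a dichotomy on the \textbf{Frankel property} of $(M,g)$: whether every two closed minimal surfaces in $M$ intersect. Suppose first that it holds and that, contrary to the claim, there are only finitely many connected closed embedded minimal surfaces $\Sigma_1,\dots,\Sigma_N$. Since the min-max varifold realizing each $\omega_p$ is a smooth embedded minimal surface with multiplicities, the distinct connected minimal surfaces occurring in its support are pairwise disjoint, so the Frankel property forces exactly one of them to occur; hence $\omega_p=m_p\,|\Sigma_{i(p)}|$ for some $1\leq i(p)\leq N$ and some positive integer $m_p$. On the other hand, the Lusternik--Schnirelmann inequality together with finiteness rules out $\omega_p=\omega_{p+1}$ for every $p$, so $\omega_1<\omega_2<\cdots<\omega_p$ are $p$ distinct values of the set $\{m\,|\Sigma_i|:m\geq 1,\ 1\leq i\leq N\}$ lying in $[0,\omega_p]$. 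But that set has at most $\big(\sum_i|\Sigma_i|^{-1}\big)\,\omega_p$ elements below $\omega_p$, so $\omega_p$ must grow at least linearly in $p$ --- contradicting $\omega_p=o(p)$. This settles the Frankel case.

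The substantive case is when the Frankel property fails, so $M$ contains two \emph{disjoint} closed embedded minimal surfaces. Here I would develop min-max theory on a manifold with a cylindrical end, following an idea of Song. First, from the non-Frankel hypothesis one selects a connected minimal surface $S$ and a connected component $U$ of $M\setminus S$ that is ``innermost'' --- no minimal surface of $M$ intrudes into a collar of $\partial U=S$ inside $U$ --- and forms the open manifold $M_S:=U\cup_S\big(S\times[0,\infty)\big)$ by attaching a half-cylinder with the product metric. One then sets up the Almgren--Pitts machinery on $M_S$: sweepouts, widths $\omega_p(M_S)$, and a regularity theorem showing that each width is attained by a smooth minimal surface each of whose compact components either lies in $U$ --- and then, by the maximum principle at $S$, is either $S$ itself or disjoint from $S$, hence a genuine minimal surface of $M$ --- or is a cross-section $S\times\{t\}$ of the end. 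The crux is then a \emph{cylindrical Weyl law}: a sharp asymptotic for $\omega_p(M_S)$ whose leading behaviour the purely ``end-supported'' configurations (stacks of cross-sections) cannot account for, which forces, for all large $p$, a nontrivial piece of the min-max surface to live in $U$. A careful accounting of areas then produces infinitely many distinct closed embedded minimal surfaces inside $U\subseteq M$, completing the proof of Yau's conjecture.

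The main obstacle is exactly this cylindrical case: one must build a workable min-max theory on the noncompact manifold $M_S$ --- in particular controlling sweepouts and min-max varifolds near the end and excluding the possibility that a minimizing sequence runs off to infinity and degenerates with no limit in $M_S$ --- and then establish the cylindrical Weyl law sharply enough both to separate interior contributions from end contributions and to guarantee that infinitely many \emph{distinct} minimal surfaces of $M$ are detected. A secondary difficulty is choosing $S$ and $U$ correctly in the non-Frankel case so that the surfaces produced are genuinely new and do accumulate. As a minor point, the orientability subtleties are harmless for the conjecture as stated: even a one-sided (nonorientable) min-max minimal surface, or an even multiple of one, is embedded and hence immersed, which is all that is asked.
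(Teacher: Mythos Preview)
The paper contains no proof of this statement. Conjecture~1.1 appears only as historical context in the introduction; the paper explicitly attributes its resolution (for $2\leq n\leq 6$, hence in particular for three-manifolds) to A.~Song \cite{song_existence_2018}, and then turns to its own, separate, Main Theorem about $C^\infty$-\emph{generic} metrics in dimensions $n+1\geq 8$. There is therefore nothing in the paper to compare your proposal against.

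For what it is worth, your outline is a faithful summary of Song's strategy: the Frankel/non-Frankel dichotomy, the Lusternik--Schnirelmann inequality combined with the Weyl-law sublinearity to derive a contradiction in the Frankel case, and in the non-Frankel case the attachment of a cylindrical end to a well-chosen core $U$ together with a sharp ``cylindrical'' asymptotic for the widths that forces interior minimal surfaces to appear. You also correctly flag the substantive obstacles --- constructing a workable min-max theory on the noncompact $M_S$ with control near the end, proving the cylindrical width asymptotic sharply enough to separate end from interior contributions, and selecting $S$ and $U$ so that the surfaces detected are genuinely distinct. But your proposal stops at \emph{naming} these obstacles; it gives no indication of how to overcome any of them. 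As written, then, this is an accurate roadmap of an existing argument rather than a proof, and in any event it is not something the present paper attempts.
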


    In \cite{irie_density_2018}, K. Irie, F.C. Marques and A. Neves, using the Weyl law \cite{liokumovich_weyl_2018} for volume spectra by Y. Liokumovich and the last two named authors, proved a stronger version of Yau's conjecture in the generic case.

    \begin{theorem}[Density of minimal hypersurfaces in the generic case, \cite{irie_density_2018}]
        Let $M^{n+1}$ be a closed manifold of dimension $n+1$, with $3\leq n+1 \leq 7$. Then for a $C^\infty$-generic Riemannian metric $g$ on $M$, the union of all closed, smooth, embedded minimal hypersurfaces is dense.
    \end{theorem}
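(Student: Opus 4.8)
The plan is to combine the Weyl law for the volume spectrum with the Almgren--Pitts min-max construction and White's bumpy metric theorem, reducing the density assertion to a local statement about each member of a countable basis of $M$. First I would fix a countable basis $\{U_i\}_{i\in\mathbb{N}}$ of open sets of $M$ and, for each $i$, let $\mathcal{M}_i$ be the set of metrics $g$ for which $(M,g)$ admits a nonempty, smooth, closed, embedded minimal hypersurface meeting $U_i$. The union of all such hypersurfaces is dense in $M$ exactly when $g\in\bigcap_i\mathcal{M}_i$, and a countable intersection of residual sets is residual, so it suffices to show each $\mathcal{M}_i$ contains a dense open subset of the space $\mathrm{Met}(M)$ of $C^\infty$ metrics; Baire's theorem then concludes. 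Throughout I use that, because $3\le n+1\le 7$, the Almgren--Pitts $p$-width $\omega_p(M,g)$ is attained by a stationary integral varifold whose support is a smooth, closed, embedded minimal hypersurface $\Sigma_{p,g}$ counted with integer multiplicities, by the regularity theory of Pitts \cite{pitts_existence_1981} and Schoen--Simon \cite{schoen_regularity_1981}; this dimension bound is precisely what permits passing freely between ``stationary integral varifold'' and ``smooth embedded minimal hypersurface''.

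The core of the proof is \emph{density}, and this is where the Weyl law \cite{liokumovich_weyl_2018} enters. I would argue by contradiction: suppose $\mathcal{M}_i$ is not dense, so that some ball $B_\varepsilon(g_0)\subset\mathrm{Met}(M)$ contains no metric admitting a minimal hypersurface meeting $U_i$. Choose $h\in C^\infty(M)$ with $h\ge0$, $\mathrm{supp}\,h\subset U_i$, $h\not\equiv0$, and set $g_t=(1+th)g_0$, so that $g_t\in B_\varepsilon(g_0)$ for $t\in[0,\delta]$ and $\delta$ small. The function $t\mapsto\omega_p(M,g_t)$ is nondecreasing (since $g_s\le g_t$ for $s\le t$, so areas in every sweepout can only increase) and Lipschitz, and at almost every $t$ its derivative equals $\tfrac12\int\mathrm{tr}_{\Sigma_{p,t}}(\partial_t g_t)\,d\|V_{p,t}\|$, evaluated on the min-max varifold $V_{p,t}$ --- an envelope-type formula in which the width is differentiated with its realizing varifold held fixed. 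By hypothesis $\Sigma_{p,t}$ is disjoint from $U_i\supset\mathrm{supp}\,h$, while $\partial_t g_t=h\,g_0$ vanishes off $\mathrm{supp}\,h$, so this derivative is $0$; hence $t\mapsto\omega_p(M,g_t)$ is constant on $[0,\delta]$ for every $p$. Dividing by $p^{1/(n+1)}$ and letting $p\to\infty$, the Weyl law gives $a(n)\,\mathrm{Vol}(M,g_t)^{n/(n+1)}=a(n)\,\mathrm{Vol}(M,g_0)^{n/(n+1)}$, hence $\mathrm{Vol}(M,g_t)=\mathrm{Vol}(M,g_0)$ for all $t\in[0,\delta]$. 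But $\mathrm{Vol}(M,g_t)=\int_M(1+th)^{(n+1)/2}\,dV_{g_0}$ is strictly increasing in $t$ because $h\ge0$ and $h>0$ on a nonempty open set --- a contradiction. Therefore each $\mathcal{M}_i$ is dense.

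For \emph{openness} I would pass to nondegenerate hypersurfaces. By White's bumpy metric theorem the set $\mathcal{B}$ of metrics for which no closed embedded minimal hypersurface carries a nontrivial Jacobi field is residual; and if a metric possesses a nondegenerate closed embedded minimal hypersurface meeting $U_i$, the implicit function theorem produces one for every sufficiently nearby metric, still meeting the open set $U_i$. Hence the set $\mathcal{N}_i\subset\mathcal{M}_i$ of metrics possessing a nondegenerate closed embedded minimal hypersurface meeting $U_i$ is open. Its density follows by combining the density of $\mathcal{M}_i$ just established with the genericity of bumpy metrics --- for instance by running the perturbation argument of the previous paragraph starting from a bumpy metric, or by perturbing the family $g_t$ so that it is bumpy for a dense set of parameters, at which the corresponding min-max hypersurface is automatically nondegenerate. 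Replacing $\mathcal{M}_i$ by $\mathcal{N}_i$ throughout and applying Baire's theorem to $\bigcap_i\mathcal{N}_i$ then yields a $C^\infty$-generic set of metrics for which the union of all closed, smooth, embedded minimal hypersurfaces is dense in $M$.

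I expect the principal obstacle to be the justification of the derivative/constancy step for $t\mapsto\omega_p(M,g_t)$: monotonicity alone will not do --- one needs that the (a.e.) derivative is governed by the \emph{min-max} varifold $V_{p,t}$, which rests on the fine structure of Almgren--Pitts min-max, in particular the fact that almost-maximizing slices of a near-optimal sweepout concentrate, in the varifold topology, on $V_{p,t}$; this is the technical input drawn from the work of Marques--Neves. A secondary delicate point is the interface between the honest density of $\mathcal{M}_i$ and the merely-$G_\delta$ character of bumpiness, needed to upgrade to density of the \emph{open} set $\mathcal{N}_i$.
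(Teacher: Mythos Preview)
The paper does not itself prove this theorem---it is quoted from \cite{irie_density_2018}---but the paper's Key Lemma (Lemma~\ref{lemma:KEY}) reproduces the Irie--Marques--Neves mechanism, so one can compare against that.

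Your density argument has a genuine gap, and it is precisely the step you flag as the ``principal obstacle'': the envelope formula asserting that the a.e.\ derivative of $t\mapsto\omega_p(M,g_t)$ equals $\tfrac12\int\mathrm{tr}_{\Sigma_{p,t}}(\partial_t g_t)\,d\|V_{p,t}\|$ for \emph{some} min-max realizer $V_{p,t}$. This is not available in the literature and is not how the proof proceeds. Without it, the implication ``every realizer avoids $U_i$ $\Rightarrow$ $\omega_p(g_t)$ constant'' fails: the realizing hypersurface $\Sigma_{p,t}$ can jump with $t$, and even if each $\Sigma_{p,t}$ misses $U_i$ (hence has the same $g_0$- and $g_t$-area), nothing forces $\mathrm{area}_{g_0}(\Sigma_{p,t})$ to equal $\omega_p(g_0)$.

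The actual argument in \cite{irie_density_2018}, and the one the paper invokes, replaces your differentiation step by a \emph{countability} step and uses bumpy metrics at the outset rather than only for openness. Start from a bumpy $g'$ near $g$; then the set $\mathcal{C}$ of all finite $\mathbb{N}$-linear combinations of areas of closed embedded minimal hypersurfaces in $(M,g')$ is countable. The Weyl law is used not to derive a contradiction from constancy, but to guarantee \emph{non}-constancy: since $\mathrm{Vol}(M,g'(t_0))>\mathrm{Vol}(M,g')$, there exists $p$ with $\omega_p(M,g'(t_0))>\omega_p(M,g')$. By continuity of $t\mapsto\omega_p(M,g'(t))$ and countability of $\mathcal{C}$, one finds $t_1\in(0,t_0)$ with $\omega_p(M,g'(t_1))\notin\mathcal{C}$. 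Now if the realizer $V$ at $g'(t_1)$ avoided $U$, then since $g'(t_1)=g'$ off $U$ it would be minimal for $g'$ with the same area, forcing $\omega_p(M,g'(t_1))\in\mathcal{C}$---contradiction. Thus monotonicity and Lipschitz continuity of the widths \emph{are} enough, once combined with the countable area spectrum; no derivative formula is needed. Your treatment of openness via nondegenerate hypersurfaces and the implicit function theorem is, by contrast, essentially the standard one.
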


    Later, in \cite{marques_equidistribution_2019}, F.C. Marques, A. Neves and A. Song gave a quantitative description of the density, i.e., the equidistribution of a sequence of minimal hypersurfaces under the same condition.

    The Yau's conjecture for $2 \leq n \leq 6$ for general $C^\infty$ metrics was finally resolved by A. Song \cite{song_existence_2018} using the methods developed by F.C. Marques and A. Neves in \cite{marques_existence_2017}.

    Recently, X. Zhou \cite{zhou_multiplicity_2019} confirmed Marques-Neves multiplicity one conjecture for bumpy metrics, which combined with work of Marques-Neves \cite{marques_morse_2018-1} on the Morse index leads to:

    \begin{theorem}[\cite{marques_morse_2018-1}, Theorem 8.4]
        Let $g$ be a $C^\infty$-generic (bumpy) metric on a closed manifold $M^{n+1}$, $3 \leq (n+1) \leq 7$. For each $k\in \mathbb{N}$, there exists a smooth, closed, embedded, multiplicity one, two-sided, minimal hypersurface $\Sigma_k$ such that
        \begin{equation}
            \omega_k(M, g) = \mathrm{area}_g(\Sigma_k)\myhl{,} \quad \mathrm{index}(\Sigma_k) =k,
        \end{equation}
        and
        \begin{equation}
            \lim_{k\rightarrow \infty}\frac{\mathrm{area}_g(\Sigma_k)}{k^{\frac{1}{n+1}}} = a(n) \mathrm{vol}(M, g)^{\frac{n}{n+1}},
        \end{equation}
        where \myhl{$\omega_k(M,g)$ is the volume spectrum, more precisely, the min-max $k$-width,} and $a(n) > 0$ is a dimensional constant in Weyl law.
    \end{theorem}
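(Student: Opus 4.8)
The plan is to assemble the statement from four ingredients — Almgren--Pitts min-max theory for the volume spectrum, the Schoen--Simon regularity theory, X. Zhou's multiplicity one theorem, and the Marques--Neves index estimates — and then to read off the asymptotics from the Weyl law. First I would recall that $\omega_k(M,g)$ is the min-max width associated to $k$-sweepouts of the space of mod-$2$ flat cycles $\mathcal{Z}_n(M;\mathbb{Z}_2)$, i.e.\ the cohomological min-max value over the $k$-th cup power of the fundamental class. Standard Almgren--Pitts theory together with the Schoen--Simon regularity theory — available because $3\le n+1\le 7$, so the min-max varifold has empty singular set — produces, for each $k$, a closed minimal hypersurface, a priori only an integral varifold $V_k=\sum_i m_i\,\Sigma_k^i$ with $\|V_k\|(M)=\omega_k(M,g)$ and each $\Sigma_k^i$ smooth, closed, embedded.

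Second, I would invoke Zhou's resolution of the Marques--Neves multiplicity one conjecture for bumpy metrics. Since $g$ is $C^\infty$-generic, by the bumpy metrics theorem we may assume $g$ is bumpy, and then Zhou's theorem guarantees that the min-max varifold $V_k$ is two-sided and has multiplicity one; hence $V_k=\Sigma_k$ is a single smooth, closed, embedded, two-sided, multiplicity one minimal hypersurface with $\mathrm{area}_g(\Sigma_k)=\omega_k(M,g)$. This is the crucial upgrade, and it is the step I expect to be the main obstacle if one had to prove it from scratch: it rests on a delicate constrained / prescribed-mean-curvature min-max construction and a classification of the possible degenerations of the min-max sequence. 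Here, however, it is available as a cited black box.

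Third, I would pin down the Morse index. The Marques--Neves index estimates, applied along a suitable min-max sequence realizing $\omega_k$, yield the two-sided bound
\[ \mathrm{index}(\Sigma_k)\ \le\ k\ \le\ \mathrm{index}(\Sigma_k)+\mathrm{nullity}(\Sigma_k). \]
Since $g$ is bumpy, every closed minimal hypersurface is non-degenerate, so $\mathrm{nullity}(\Sigma_k)=0$ and therefore $\mathrm{index}(\Sigma_k)=k$. The same non-degeneracy, via a Lusternik--Schnirelmann argument, also forces the strict inequalities $\omega_k(M,g)<\omega_{k+1}(M,g)$ for all $k$: an equality $\omega_k=\cdots=\omega_{k+l}$ would produce a positive-dimensional family of minimal hypersurfaces of index $\le k+l$ at that common area level, contradicting the isolatedness of non-degenerate minimal hypersurfaces. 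In particular the $\Sigma_k$ have pairwise distinct areas, hence are pairwise distinct, giving infinitely many.

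Finally, the asymptotic is immediate from the Weyl law for the volume spectrum of Liokumovich--Marques--Neves, which gives $\lim_{k\to\infty}\omega_k(M,g)\,k^{-\frac{1}{n+1}}=a(n)\,\mathrm{vol}(M,g)^{\frac{n}{n+1}}$; substituting $\mathrm{area}_g(\Sigma_k)=\omega_k(M,g)$ yields the claimed limit. Thus the only genuine analytic content, were the cited results not at hand, would be (i) Zhou's multiplicity one statement and (ii) the two-sided Marques--Neves index bound; the rest of the argument is assembly together with the bumpy-metric genericity.
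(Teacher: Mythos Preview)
The paper does not prove this theorem at all; it is quoted in the introduction purely as a cited background result from \cite{marques_morse_2018-1}, with Zhou's multiplicity one \cite{zhou_multiplicity_2019} named as the decisive input, and no argument is given. There is therefore no ``paper's own proof'' to compare your proposal against. Your outline is an accurate high-level summary of how the result is assembled in the cited reference --- Almgren--Pitts existence plus Schoen--Simon regularity in the range $3\le n+1\le 7$, Zhou's multiplicity one for bumpy metrics to force $V_k=\Sigma_k$ two-sided with multiplicity one, the Marques--Neves two-sided index bound $\mathrm{index}\le k\le \mathrm{index}+\mathrm{nullity}$ collapsed to equality by the vanishing nullity at a bumpy metric, and the Liokumovich--Marques--Neves Weyl law for the asymptotics --- so as a sketch it is correct, but it is a sketch of the proof in \cite{marques_morse_2018-1}, not of anything in the present paper.
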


    Note that most of the results above were obtained in the Almgren-Pitts min-max setting (Zhou's result on the multiplicity one conjecture was based on a new regularization of the area functional in Cacciopoli min-max setting developed by him and J. Zhu \cite{zhou_existence_2018}). In the Allen-Cahn min-max setting, P. Gaspar and M.A.M. Guaraco \cite{gaspar_weyl_2019} and O. Chodosh and C. Mantoulidis \cite{chodosh_minimal_2018}($n=2$) gave similar results. In particular, O. Chodosh and C. Mantoulidis proved the multiplicity one conjecture in $3$-manifolds before Zhou's result.

    Most of the results above rely on two important ingredients, the upper bounds for Morse index and the denseness of bumpy metrics (\cite{white_space_1991}). However, they could not be easily generalized in higher dimension ($n \geq 7$) directly from the literature above, especially in the Almgren-Pitts setting. Thus, Yau's conjecture is still left open in higher-dimensional closed manifolds.

    In this paper, we will confirm Yau's conjecture in higher dimension ($n\geq 7$) for a closed manifold $M$ with a $C^\infty$-generic metric $g$. Due to the existence of nontrivial singularities in \myhl{area-minimizing currents} in higher dimension, we can at best expect that the minimal hypersurfaces have optimal regularity, i.e., $\mathrm{codim}(\mathrm{Sing}) \geq 7$.

    \begin{theorem}[Main Theorem]
        Given a closed manifold $M^{n+1}(n \geq 7)$, there exists a (Baire sense) generic subset of $C^\infty$ metrics such that $M$ endowed with any one of those metrics contains infinitely many singular minimal hypersurfaces with optimal regularity.
    \end{theorem}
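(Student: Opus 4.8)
The plan is to adapt the Irie–Marques–Neves density argument to the higher-dimensional singular setting, and then upgrade density to the existence of infinitely many hypersurfaces by a standard Baire-category/parametrized argument. The two pillars of the IMN proof are: (i) the Weyl law for the volume spectrum $\{\omega_k(M,g)\}$, which in the Liokumovich–Marques–Neves form holds in all dimensions; and (ii) the fact that for a generic metric, stationary integral varifolds arising from the Almgren–Pitts min-max are (multiplicity-one) smooth minimal hypersurfaces, so that the min-max widths depend on the metric in a way that lets one ``detect'' a prescribed region. In dimension $n\geq 7$ the obstruction is that min-max produces stationary integral varifolds whose support is a minimal hypersurface only with \emph{optimal regularity} (codimension-$7$ singular set), by the Schoen–Simon regularity theory, and White's bumpy metric theorem is not available in the form used for $n\leq 6$. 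So the first task is to set up the correct replacement: a notion of ``good'' metric for which every min-max varifold has optimal regularity and, on its regular part, behaves like a nondegenerate minimal hypersurface.

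First, I would fix the Almgren–Pitts setup for $\mathcal{Z}_n(M;\mathbb{Z}_2)$ and recall that for every $g$ and every $k$ there is a stationary integral varifold $V_{k,g}$ with $\|V_{k,g}\|(M)=\omega_k(M,g)$, whose support is, by Schoen–Simon, a minimal hypersurface with $\operatorname{codim}(\operatorname{Sing})\geq 7$. Next, I would carry out the core IMN perturbation step: given a nonempty open set $U\subset M$, choose a nonnegative $h\in C^\infty(M)$ supported in $U$, not identically zero, and consider the path of metrics $g_t=(1+th)^{2/n}\,g$ (or any conformal-type family scaling the $n$-volume inside $U$). The key computation, identical in form to IMN, is that $t\mapsto\omega_k(M,g_t)$ is monotone and its derivative is controlled by the first variation of mass, i.e. $\frac{d}{dt}\big|_{t=0}\omega_k(M,g_t)$ equals (up to a constant) $\int_U h\,d\|V_{k,g}\|$; this uses only that $V_{k,g}$ is stationary, which holds in all dimensions. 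Hence if \emph{no} min-max hypersurface for $g$ met $U$, then $\omega_k(M,g_t)$ would be constant in $t$ near $0$ for all $k$, contradicting the Weyl law $\omega_k(M,g_t)\sim a(n)\operatorname{vol}(M,g_t)^{n/(n+1)}k^{1/(n+1)}$ since $\operatorname{vol}(M,g_t)$ genuinely increases. This is the same ``Lusternik–Schnirelmann + Weyl law'' mechanism, and crucially it never needs smoothness of the min-max varifold — only stationarity plus Schoen–Simon regularity of its support away from a codimension-$7$ set. I expect the main technical obstacle to live here: ensuring the first-variation identity for $\frac{d}{dt}\omega_k$ is valid when $V_{k,g}$ may have a singular set, which requires checking that the relevant monotonicity and the structure of the min-max critical points (e.g. the deformation arguments in Pitts, and the continuity of $g\mapsto\omega_k(M,g)$) go through with only optimal regularity — this is where I would have to be careful rather than invoke $n\leq 6$ results verbatim.

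With the density statement in hand — for a generic (in the Baire sense) set $\mathcal{G}$ of metrics, the union of all optimally-regular minimal hypersurfaces is dense in $M$ — I would then deduce the existence of infinitely many of them. Suppose for contradiction that for some $g\in\mathcal{G}$ there are only finitely many such hypersurfaces $\Sigma_1,\dots,\Sigma_N$. Their union is a closed set with empty interior, but density says every point of $M$ lies in its closure, so $\bigcup_i\Sigma_i$ would be dense \emph{and} closed, hence all of $M$ — impossible since each $\Sigma_i$ has (Hausdorff) dimension $n<n+1$. Thus $g\in\mathcal{G}$ forces infinitely many optimally-regular minimal hypersurfaces, which is exactly the Main Theorem with the generic set taken to be $\mathcal{G}$ (intersected, if convenient, with other comeager sets such as the set where the widths $\omega_k$ are realized by varifolds of optimal regularity, which is itself comeager by Schoen–Simon plus a semicontinuity argument).

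A few structural remarks on how I would organize the write-up. I would isolate as a preliminary the ``Structure of min-max varifolds in higher dimensions'' lemma: for any metric, $\omega_k(M,g)$ is achieved by a stationary integral $n$-varifold whose support is a minimal hypersurface with optimal regularity, with multiplicity bounds irrelevant for the density argument. Then the ``Weyl law'' is quoted from \cite{liokumovich_weyl_2018}. Then the ``Perturbation Lemma'' (the $\frac{d}{dt}\omega_k$ identity) is stated and proved following \cite{irie_density_2018}, with the only new input being the justification that singularities of codimension $\geq 7$ do not affect the first-variation formula because $\mathcal{H}^{n}(\operatorname{Sing})=0$ and the tangent-cone/monotonicity machinery of Schoen–Simon applies. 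Finally the ``Density'' theorem and the ``Infinitely many'' corollary follow formally. I believe essentially every step except the regularity/first-variation bookkeeping is a routine transcription of the $n\leq 6$ arguments, so I would front-load the effort into stating precisely which theorems of \cite{schoen_regularity_1981} and \cite{pitts_existence_1981} (and perhaps the Almgren–Pitts deformation theorems) remain valid with optimal regularity, and cite or reprove them as needed.
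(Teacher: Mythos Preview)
Your proposal has a genuine gap, and it lies precisely where you think the argument is routine rather than where you flag difficulty.

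The identity $\frac{d}{dt}\big|_{t=0}\omega_k(M,g_t)=\int_U h\,d\|V_{k,g}\|$ is not available: the width is only Lipschitz in $t$, the realizing varifold $V_{k,g}$ is not unique, and nothing prevents the realization from jumping as $t$ varies. The Irie--Marques--Neves argument does \emph{not} proceed by differentiating $\omega_k$. What it actually uses is that at a \emph{bumpy} metric $g'$ the set $\mathcal{C}$ of finite $\mathbb{N}$-linear combinations of areas of minimal hypersurfaces is countable; then one finds $k$ and $t_1>0$ with $\omega_k(M,g'(t_1))\notin\mathcal{C}$, forcing the min-max hypersurface at $g'(t_1)$ to meet $U$. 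The countability of $\mathcal{C}$ is the crux, and for $n\geq 7$ there is no bumpy-metric theorem to supply it. You acknowledge this obstacle in your first paragraph but never address it; the ``first-variation bookkeeping'' you worry about instead is a non-issue, since stationarity of $V_{k,g}$ is by definition a statement about first variation and needs no regularity.

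The paper's route is quite different from a direct transcription of IMN, and in particular it does \emph{not} prove density (that is left as a conjecture). Instead it argues as follows. Let $\mathcal{M}_f$ be the set of metrics admitting only finitely many optimally regular minimal hypersurfaces. On any open set $\mathcal{O}$ in which $\mathcal{M}_f$ is dense, one can run the IMN perturbation starting from some $g'\in\mathcal{M}_f$: at such a $g'$ the set $\mathcal{C}$ is automatically countable (finitely many hypersurfaces), so the IMN step goes through and produces, arbitrarily close to $g'$, a metric in $\mathcal{M}_{U,p}=\{g:\text{every }V\in\mathcal{APR}_p(M,g)\text{ meets }U\}$. A new compactness theorem for the Almgren--Pitts realizations $\mathcal{APR}_p$ (based on a ``property $(m)$'' stability substitute for index bounds) shows $\mathcal{M}_{U,p}$ is open, hence $\mathcal{M}_f\cap\mathcal{O}$ is meagre in $\mathcal{O}$. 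Taking $\mathcal{O}=\mathrm{Int}(\overline{\mathcal{M}_f})$ and noting $\mathcal{M}_f\subset(\mathcal{M}_f\cap\mathcal{O})\cup\partial\overline{\mathcal{M}_f}$ finishes the proof. The point is that $\mathcal{M}_f$ itself plays the role that bumpy metrics play in low dimensions; your plan lacks any such substitute.
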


    Here is the outline of the proof of our main theorem.

    First, we establish the compactness for the Almgren-Pitts Realization $\mathcal{APR}(\Pi)$ \myhl{of the min-max width} of an $m$-parameter homotopy family $\Pi$, where $\mathcal{APR}(\Pi)$ is a nonempty subset of minimal hypersurfaces with optimal regularity, volume $\mathbf{L}(\Pi)$ and certain \myhl{stability property}.

    Then, by showing that the $p$-width could be achieved by \myhl{the min-max width of} an $m$-parameter homotopy family $\mathcal{P}_{p,m}$, we also have the compactness for the Almgren-Pitts Realization $\mathcal{APR}_p$ of $p$-width.

    Finally, the proof of the main theorem follows the idea of Proposition 3.1 in \cite{irie_density_2018} with a tricky modification. Roughly speaking, to overcome the difficulty of the lack of bumpy metrics, we will apply the compactness results to obtain the openness of ``good'' metrics and use $\mathcal{M}_f$ (the set of metrics where Yau's conjecture fails) to be the starting point of metric perturbation. If we divide the metric space into two parts $\mathcal{O} = \mathrm{Int}(\overline{\mathcal{M}_f})$ and $\mathcal{O}^c$, then on the one hand, in $\mathcal{O}$, with two ingredients mentioned above, we can show that $\mathcal{M}_f\cap \mathcal{O}$ is meagre. On the other hand, by definition, it is clear that $\mathcal{M}_f \cap \mathcal{O}^c$ is nowhere dense. \myhl{In summary,} $\mathcal{M}_f$ is meagre.

    In addition, together with White's structure theorem on minimal submanifolds \cite{white_bumpy_2017}, our proof above also implies the generic denseness of min-max minimal hypersurfaces.

    \begin{theorem}[a.k.a. Corollary \ref{cor:end}]
        For a closed manifold $M^{n+1}(2 \leq n \leq 6)$ with a $C^\infty$-generic metric $g$, min-max minimal hypersurfaces are dense.
    \end{theorem}

\section*{Acknowledgements}
    
    I am grateful to my advisor Fernando Cod\'a Marques for his constant support and his patient guidance in my study of Almgren-Pitts min-max theory. I would also thank Xin Zhou for inspiring discussions and his pointing out reference \cite{xu_p;m-width_2016}. Finally, I would like to thank the referee for their careful review.

\section{Preliminaries}
    
    \subsection{Basic Notations in the Almgren-Pitts Min-max Theory}

        In this paper, we will use $\mathcal{Z}_n(M^{n+1}, \mathbb{Z}_2)$ to denote the space of modulo two $n$-dimensional flat cycles \myhl{endowed with the flat topology}. In Almgren's thesis \cite{almgren_homotopy_1962}, he gave a natural isomorphism
        \begin{equation}
            \pi_k(\mathcal{Z}_n(M^{n+1}, \mathbb{Z}_2), 0) = H_{n+k}(M^{n+1}, \mathbb{Z}_2),
        \end{equation}
        and later, it was shown that $\mathcal{Z}_n(M^{n+1}, \mathbb{Z}_2)$ is weakly homotopic to $\mathbb{RP}^\infty$ (\cite{marques_topology_2016}, Section 4). We denote the generator of $H^1(\mathcal{Z}_n(M^{n+1}, \mathbb{Z}_2); \mathbb{Z}_2)=\mathbb{Z}_2$ by $\bar\lambda$.
        
        In the cycles space, we can define $\mathbf{M}$ to be the mass norm or mass functional (See \cite{federer_geometric_1996}, Section 4.2). The nontrivial topology of $\mathcal{Z}_n(M^{n+1}, \mathbb{Z}_2)$ indicates that the min-max theory for the mass functional can be developed in it\myhl{.}

        Let $X$ be a finite dimensional simplicial complex and we call a map $\Phi$ a \textbf{$p$-sweepout} if it is a continuous map $\Phi: X \rightarrow \mathcal{Z}_n(M^{n+1}, \mathbb{Z}_2)$ such that $\Phi^*(\bar\lambda^p) \neq 0$. The \textbf{$p$-admissible set} $\mathcal{P}_p = \mathcal{P}_p(M, g)$ is the set of all $p$-sweepouts $\Phi$ that have \textbf{no mass concentration} (See \cite{marques_min-max_2014} Section 4), i.e.
            \begin{equation}
                \myhl{\lim_{r\rightarrow 0}\sup}\set{\mathbf{M}(\Phi(x)\cap B_r(y)): x \in X, y \in M} = 0.
            \end{equation}

        Now the \textbf{$p$-width} $\omega_p(M,g)$ can be defined by $\inf_{\Phi \in \mathcal{P}_p}\sup \set{\mathbf{M}(\Phi(x)): x\in \mathrm{dmn}({}\Phi)}$ (\cite{gromov_isoperimetry_2003}, \cite{guth_minimax_2009}, \cite{marques_existence_2017}, \cite{liokumovich_weyl_2018}). 

        We also define a \textbf{min-max sequence} $S = \set{\Phi_i}_{i\in \mathbb{N}}$ to be a sequence in $\mathcal{P}_p$ satisfying 
        \begin{equation}
            \limsup_{i\rightarrow \infty}\sup_{x\in \myhl{X_i}} \mathbf{M}(\Phi_i(x)) = \omega_p.
        \end{equation}
        The critical set of $S$ is
        \begin{equation}
            \mathbf{C}(S) = \set{V| \|V\|(M) = \omega_p  \text{ and } V = \lim_j |\Phi_{i_j}(x_j)| \myhl{\text{ for some sequence } \set{x_j \in X_{i_j}}}}.
        \end{equation}

        Note that the domains of maps in $\mathcal{P}_p$ could be different. In order to get the compactness, we need to make some restriction on the domains. 

        \myhl{For convenience, we shall use some notions of cell complexes from \cite{pitts_existence_1981}.}

        \begin{definition}[\cite{pitts_existence_1981}, 4.1(1)]\quad
            \begin{itemize}
                \item \myhl{For $n \in \mathbb{N}^+$, let $I(1,n)$ denote the cell complex on the unit interval $I$ whose $1$-cells are the intervals $[0, 1 \cdot 3^{-n}], [1 \cdot 3^{-n}, 2 \cdot 3^{-n}], \cdots, [1 - 3^{-n}, 1]$, and whose $0$-cells are the endpoints $[0], [3^{-n}], [2\cdot 3^{-n}], \cdots, [1]$.} 
                \item \myhl{For $n, m \in \mathbb{N}^+$, $I(m, n) = I(1, n)^{m\otimes} = I(1, n) \otimes I(1,n) \otimes \cdots \otimes I(1,n)$ is the cell complex on $I^m$. In addition, $I(m, n)_0$ denote the set of all $0$-cells in $I(m, n)$.}
            \end{itemize}
        \end{definition}
    
        Now we can give a definition of an $m$-parameter homotopy family in general, which need not be a subset of some $\mathcal{P}_p$.

        \begin{definition}
            We call a subset $\Pi$ of continuous maps from finite dimensional simplicial complexes to $\mathcal{Z}_n(M^{n+1}, \mathbb{Z}_2)$ a \textbf{(continuous) $m$-parameter homotopy family} if the following properties hold.
            \begin{itemize}
                \item For any $\Phi \in \Pi$, $X = \mathrm{dmn}(\Phi)$ is a \myhl{subcomplex of $I(m, k)$ for some $k \in \mathbb{N}^+$} and $\Phi$ has no mass concentration.
                \item For any $\Phi \in \Pi$, \myhl{every continuous $\Phi': \mathrm{dmn}(\Phi) \rightarrow \mathcal{Z}_n(M^{n+1}, \mathbb{Z}_2)$ homotopic to $\Phi$ in the flat topology also lies in $\Pi$, provided that $\Phi'$ has no mass concentration.}
            \end{itemize}
        \end{definition}

        \begin{remark}\label{rmk:CTS}
            In Pitts' original \myhl{proof \cite{pitts_existence_1981}}, he considered \myhl{discrete sweepouts} in $[I^m, \mathcal{Z}_n(M; \mathbf{M}; \mathbb{Z}_2)]^\#$. Fortunately, due to the remarkable interpolation results by F.C. Marques and \myhl{A. Neves (\cite{marques_min-max_2014},} Section 13 and 14), the discrete settings and the continuous settings are interchangeable in some sense. Thus, the $m$-parameter homotopy family \myhl{defined} here will preserve most of the properties that the discrete homotopy families have.
        \end{remark}

        Similar to $p$-width and the min-max sequence for $p$-width, we can also define a min-max invariant for the $m$-parameter homotopy family $\Pi$,
        \begin{equation}
            \mathbf{L}(\Pi) = \inf_{\Phi \in \Pi}\sup_{x\in X} \set{\mathbf{M}(\Phi(x))},
        \end{equation}
        and a min-max sequence $S = \set{\Phi_i}\subset \Pi$,
        \begin{equation}
            \limsup_{i\rightarrow \infty}\sup_{x\in X_i = \mathrm{dmn}(\Phi_i)} \mathbf{M}(\Phi_i(x)) = \mathbf{L}(\Pi).
        \end{equation}
        Now the critical set of $S$ is
        \begin{equation}
            \mathbf{C}(S) = \set{V|\|V\|(M) = \mathbf{L}(\Pi) \text{ and } V = \lim_{j \rightarrow\infty} |\Phi_{i_j}(x_j)| \myhl{\text{ for some sequence } \set{x_j \in X_{i_j}}}}.
        \end{equation}

    \subsection{Singular Minimal Hypersurfaces with Optimal Regularity and Compactness with Stability Condition}

        \begin{definition}
            We call a varifold $V$ in $M^{n+1}$ a \textbf{singular minimal hypersurface with optimal regularity}, if it is an $n$-dimensional stationary integral varifold and its singular part has dimension $\mathrm{dim}(\mathrm{Sing}(V)) \leq n-7$.
        \end{definition}

        By \myhl{Allard compactness (See \cite{allard_first_1972}, \cite{simon_lectures_1984})}, a sequence $\set{V_j}$ of singular minimal hypersurfaces with optimal regularity with bounded volume (up to a subsequence) will converge to an $n$-dimensional stationary integral varifold in the varifold sense. However, without extra information of the sequence, we could not obtain further regularity of the limit varifold.

        If we further assume that $\set{V_j}$ is stable in some open subset $U$, then from the result of \cite{schoen_regularity_1981} (See also \cite{wickramasekera_general_2014}), we have the following compactness consequence.

        \begin{proposition}\label{prop:CPT0}
            Given a sequence of singular minimal hypersurfaces with optimal regularity $\set{V_j}$ in a smooth closed manifold $(M, g)$ with uniformly bounded volume and converging to an $n$-dimensional stationary integral varifold $V$ in the varifold sense, suppose that each $V_j$ is stable (See \myhl{the} definition in \cite{schoen_regularity_1981}) in an open subset $U$ of $M$, then $V$ is stable in $U$ and \myhl{of optimal regularity} in $U$.
        \end{proposition}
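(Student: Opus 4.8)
The plan is to reduce Proposition~\ref{prop:CPT0} to the Schoen--Simon compactness theorem (and its refinement by Wickramasekera) by verifying that the hypotheses of that theorem hold locally on $U$. The Schoen--Simon theorem states that a sequence of stable minimal hypersurfaces with optimal regularity and locally bounded mass admits a subsequence converging (in the varifold sense, and smoothly away from a set of dimension $\le n-7$) to a stable minimal hypersurface with optimal regularity. Since we are already given that $\{V_j\}$ converges to $V$ as varifolds in all of $M$, the content to extract is purely local: on any open $W \Subset U$ the masses $\|V_j\|(W)$ are uniformly bounded (this follows from the global mass bound), and each $V_j$ is stationary and stable in $W$. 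Hence Schoen--Simon applies on $W$ and yields that the varifold limit is, in $W$, an integral stationary varifold that is stable and optimally regular. Because varifold limits are unique, this limit must coincide with $V|_W$. Letting $W$ exhaust $U$ gives that $V$ is stable and optimally regular in $U$.

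The key steps, in order, are as follows. First I would fix an arbitrary point $x \in U$ and a small ball $W = B_r(x)$ with $\overline{W} \subset U$; it suffices to prove the conclusion on each such $W$. Second, I would observe that stability is preserved: each $V_j$ is stable in $W$ by hypothesis, and stability is an open condition in the sense needed, so no issue arises there. Third, I would invoke the compactness theorem of \cite{schoen_regularity_1981} (in the form extended to the stationary integral varifold setting by \cite{wickramasekera_general_2014}) on $W$: the uniform mass bound $\sup_j \|V_j\|(W) < \infty$ together with stability gives a subsequence converging to a stationary integral varifold $V'$ in $W$ which is stable and optimally regular in $W$, with $\mathrm{dim}(\mathrm{Sing}(V')) \le n-7$. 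Fourth, I would use uniqueness of varifold limits to conclude $V' = V|_W$, since $V_j \to V$ already. Finally, since stability and the dimension bound on the singular set are both local properties and $x \in U$ was arbitrary, $V$ is stable and optimally regular throughout $U$.

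The only mild subtlety — and the step I would be most careful about — is checking that the regularity theory of \cite{schoen_regularity_1981}, originally phrased for hypersurfaces that are smooth outside a small singular set, applies verbatim to our ``singular minimal hypersurfaces with optimal regularity'' as defined here, i.e.\ to stationary integral varifolds whose singular set already has dimension $\le n-7$; this is exactly the generality addressed by Wickramasekera's work, so the citation to \cite{wickramasekera_general_2014} carries that weight. One also needs that the stability inequality passes to the limit, which is standard: the second variation is lower semicontinuous under varifold convergence of minimal hypersurfaces with the relevant regularity, so the limit $V$ inherits stability in $U$. Beyond these bookkeeping points, there is no genuine obstacle — this proposition is essentially a restatement of the Schoen--Simon--Wickramasekera compactness theorem adapted to the local hypothesis, and its proof is correspondingly short.
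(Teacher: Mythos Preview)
Your proposal is correct and matches the paper's approach: the paper does not give an independent proof of this proposition but simply records it as a direct consequence of the Schoen--Simon compactness/regularity theory \cite{schoen_regularity_1981} (together with Wickramasekera's extension \cite{wickramasekera_general_2014}), which is exactly what you invoke. Your localization-and-exhaustion write-up is just an explicit unpacking of that citation.
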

        \begin{remark}\label{rmk:CPTvary}
            This is also true when $V_j$ is defined on $(M, g_j)$ and $g_j$ converges to $g$ in $C^3$ (\cite{schoen_regularity_1981}, Theorem 2). 

            Recently, A. Dey \cite{dey_compactness_2019} generalized this result with the assumption that the $p$-th eigenvalue is uniformly bounded from below.
        \end{remark}

\section{Minimal Hypersurfaces from Almgren-Pitts Min-max Construction}

    In this section, we will prove some properties of singular minimal hypersurfaces \myhl{obtained} from Almgren-Pitts Min-max construction, especially realizations of $p$-widths and their compactness. 

    \subsection{Almgren-Pitts Realizations of $m$-parameter Homotopy Families and Their Compactness}

        \myhl{One of the novelties in Pitts' monograph is the application of the combinatorial arguments, Proposition 4.9 and Theorem 4.10 in \cite{pitts_existence_1981}, from which he could prove the regularity of varifolds in a critical set. Here, we shall show that these arguments could imply more properties of varifolds in a critical set.}

        \myhl{First, we adapt Proposition 4.9 in \cite{pitts_existence_1981} to our current setup and improve the constant slightly.}

        \myhl{In the following, $A(p, s, r)$ denotes an open annulus in $M$ centered at $p$ with inner radius $s$ and outer radius $r$, provided that $s < r$ and $r$ is no greater than the injective radius of $M$ at $p$. $\bar A(p, s, r)$ is simply its closure.}

        \begin{lemma}\label{lem:comb}
            \myhl{Suppose that $X$ is a subcomplex of some $I(m, n)$. If every cell $\sigma$ of $X$ is associated with a point $p_\sigma \in M$ and a finite set $A(\sigma) = \set{\bar A(p_\sigma, s_j, r_j)}_{j = 1, 2, \cdots, 5^m}$,
            where $r_j > s_j > 10r_{j+1} > 0$ ($r_1 < \mathrm{inj}_M/2$), then we can find a function defined on the set of cells of $X$ (denoted by $X(n)$),}
            \begin{equation}
                \alpha: X(n) \rightarrow \bigcup_{\sigma \in X(n)} A(\sigma),
            \end{equation}
            \myhl{such that $\alpha(\sigma) \in A(\sigma)$ and $\alpha(\sigma) \cap \alpha(\tau) = \emptyset$ whenver $\sigma \neq \tau$ and $\sigma, \tau < \gamma$ for some cell $\gamma \in X(n)$.}
        \end{lemma}

        \begin{proof}
            \myhl{We shall define $\alpha$ inductively. }

            \myhl{Let $\mathcal{D} \subset X(n)$ where $\alpha$ is already defined, and thus, at the beginning, $\mathcal{D} = \emptyset$.}

            \myhl{Whenever $\mathcal{D} \neq X(n)$, we can take an annulus with the smallest outer radius in $\bigcup_{\sigma \in X(n)\backslash \mathcal{D}} A(\sigma)$, say, $\bar A(p_{\sigma_0}, s, r) \in A(\sigma_0)$ for some $\sigma_0 \in X(n)\backslash \mathcal{D}$. Now,  $\sigma_0$ can be added into $\mathcal{D}$ as $\alpha(\sigma_0)$ is defined to be $\bar A(p_{\sigma_0}, s, r)$. Then, for each $\tau \in X(n)\backslash \mathcal{D}$ which is a face of some cell $\gamma \in X(n)$ with $\sigma_0 < \gamma$, we remove $\bar A(p_\tau, \tilde s, \tilde r) \in A(\tau)$ from the set $A(\tau)$ if $\bar A(p_\tau, \tilde s, \tilde r) \cap \bar A(p_{\sigma_0}, s, r) \neq \emptyset$. Since $\bar A(p_{\sigma_0}, s, r)$ is of the smallest outer radius, by the radius relations of the annuli in $A(\tau)$, it is easy to check that we at most remove one annulus from each $A(\tau)$.} 

            \myhl{Note that $X$ is a subcomplex of $I(m, n)$, so for any cell $\tau \in X(n)$, we can find at most $5^m - 1$ other cells such that any one of them and $\tau$ are both faces of some cell $\gamma \in X(n)$. In other words, there will be at most $5^m - 1$ annuli being removed from each $A(\tau)$ and therefore, $A(\tau)$ will never be empty. This, together with the fact that there are only finitely many cells of $X$, guarantees that the procedure above could give the definition of $\alpha$ on the whole $X(n)$ as desired.}
        \end{proof}

        \myhl{Now, we can state our main lemma on the properties of varifolds in a critical set.}

        \begin{lemma}\label{lemma: AM}
            \myhl{Suppose $S = \{\Phi_i\}$ is a pulled-tight critical sequence for a $m$-parameter homotopy family $\Pi$ with $\mathbf{L}(\Pi)>0$. Then there exists a varifold $V \in \mathbf{C}(S)$ such that for any $I_m = 5^m$ concentric annuli $\set{A(p, r_j - s_j, r_j + s_j)}$ with $r_j >0$, $s_j > 0$, $r_j - 2s_j > 10(r_{j+1} + 2 s_{j+1})$, $r_{I_m} - 2 s_{I_m}>0$ and $r_1 < \mathrm{inj}_M/2$, $V$ is almost minimizing (See Definitions 3.1 in \cite{pitts_existence_1981}) in at least one of the annuli. Therefore, by Theorem 3.3 in \cite{pitts_existence_1981}, $V$ has \textbf{property ($m$)}: }

            \myhl{For any $I_m$ concentric annuli $\set{A(p, r_j - s_j, r_j + s_j)}$ with $r_j >0$, $s_j > 0$, $r_j - 2s_j > 10(r_{j+1} + 2 s_{j+1})$, $r_{I_m} - 2 s_{I_m}>0$ and $r_1 < \mathrm{inj}_M/2$, $V$ is stable in at least one of the annuli.}
        \end{lemma}
        \begin{proof}
            \myhl{Suppose not, and then for each $V$ in $\mathbf{C}(S)$, there is $p^V \in M$ such that there exist $5^m$ concentric annuli $A(p^V, r^V_j - s^V_j, r^V_j + s^V_j)$ with the conditions mentioned above and $V$ is not almost minimizing in any one of the annuli.}

            \myhl{As explained in Remark \ref{rmk:CTS}, one could use a family of discrete sweepouts to approximate $S$. Roughly speaking, for each $\Phi_i \in S$ with $X_i = \mathrm{dmn}(\Phi_i)$ where $X_i \subset I(m, n_i)$ is a subcomplex, we can take $N_i > n_i$ large enough and define }
            \begin{equation}
                \psi_i: X_i \cap I(m, N_i)_0 \rightarrow \mathcal{Z}_n(M; \mathbb{Z}_2),    
            \end{equation}
            \myhl{which approximates $\Phi_i$. Note that the discrete sweepouts $S_\# = \set{\psi_i}$ satisfy that $\mathbf{C}(S_\#) = \mathbf{C}(S)$ and the Almgren extension $\Psi_i$ of $\psi_i$ is homotopic to $\Phi_i$ in the flat topology. Interested readers might refer to the proof for Theorem 3.8 in \cite{marques_morse_2016} for more details. Although they require that each $\Phi_i$ is continuous with respect to the $\mathbf{F}$ norm (See the definition in \cite{pitts_existence_1981} 2.1(20)), the same proof works in our setup as well. }

            \myhl{For convenience, in the following, we may assume that $X_i$ itself is a subcomplex of $I(m, N_i)$, since one could always refine $X_i$ in a canonical way. }
        
            \myhl{Now following the original proof of Theorem 4.10 in \cite{pitts_existence_1981}, for $i$ large enough, we can assign to each face $\sigma \in X_i(N_i)$ a set $A(\sigma)=\set{A(p^{V_\sigma}, r^{V_\sigma}_j - s^{V_\sigma}_j, r^{V_\sigma}_j + s^{V_\sigma}_j)}_{j = 1, \cdots, 5^m}$ for some $V_\sigma$ associated to $\sigma$, where $V_\sigma$ is one of the $\set{V_1,\dots, V_\nu}$ therein. By Lemma \ref{lem:comb}, we could define $\alpha$ on $X_i(N_i)$. Therefore, the existsence of a homotopic family of discrete sweepouts $S^*_\# = \set{\psi^*_i}$ such that $\mathbf{L}(S^*_\#) < \mathbf{L}(S_\#) = \mathbf{L}(S)$ is just verbatim.}

            \myhl{The Almgren extension $\Psi^*_i$ of $\psi^*_i$ is homotopic to $\Psi_i$ and thus to $\Phi_i$ which implies that $\set{\Psi^*_i} \subset \Pi$. As long as each $N_i$ is chosen large enough, $\mathbf{L}(\set{\Psi^*_i}) = \mathbf{L}(S^*_\#) < \mathbf{L}(S)$, which gives a contradiction to the choice of $S$. Hence, the conclusion holds.}
        \end{proof}

        By applying \myhl{Theorem 4 in \cite{schoen_regularity_1981}}, the varifold obtained in the lemma above is indeed a singular minimal hypersurface with optimal regularity. 

        \begin{definition}
            We define \textbf{Almgren-Pitts Realization} of the $m$-parameter homotopy family $\Pi$, denoted by $\mathcal{APR}(\Pi)$, to be the nonempty set of varifolds $V$ satisfying
            \begin{itemize}
                \item $\|V\|(M) = \mathbf{L}(\Pi)$\myhl{;}
                \item $V$ is a singular minimal hypersurface with optimal regularity\myhl{;}
                \item $V$ has property $(m)$.
            \end{itemize}
        \end{definition}

        Now, we can show that $\mathcal{APR}$ has following compactness property.

        \begin{proposition}[Compactness of $\mathcal{APR}$]\label{prop:CPT}
            Given a sequence of $m$-parameter homotopy families $\Pi_i$ with $0 < \inf \mathbf{L}(\Pi_i) \leq \sup \mathbf{L}(\Pi_i) < \infty$ \myhl{and} $V_i \in \mathcal{APR}(\Pi_i)$, there is a subsequence (still denoted using index $i$) $\Pi_i$ such that
            \begin{equation}
                 \lim_{i\rightarrow \infty}\mathbf{L}(\Pi_i) = L \in \mathbb{R}^+,
            \end{equation}
            and 
            \begin{equation}
                V_i \rightharpoonup V,
            \end{equation}
            in the varifold sense. Moreover, $\mathrm{spt}(V)$ is also a singular minimal hypersurface with optimal regularity and property $(m)$.
        \end{proposition}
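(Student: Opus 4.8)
The plan is to combine the Federer--Fleming compactness theorem with the Schoen--Simon regularity theory (Proposition~\ref{prop:CPT0}) and then verify that property~$(m)$ passes to the limit. I would proceed in four steps.

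\emph{Step 1: Extract the limit.} Since $0 < \inf \mathbf{L}(\Pi_i) \leq \sup\mathbf{L}(\Pi_i) < \infty$, the sequence $\mathbf{L}(\Pi_i)$ is bounded in $\mathbb{R}^+$, so after passing to a subsequence $\mathbf{L}(\Pi_i) \to L$ for some $L \in \mathbb{R}^+$. Each $V_i \in \mathcal{APR}(\Pi_i)$ is a stationary integral varifold with $\|V_i\|(M) = \mathbf{L}(\Pi_i)$, hence of uniformly bounded mass; by Federer--Fleming / Allard compactness for stationary integral varifolds (\cite{simon_lectures_1984}), a further subsequence satisfies $V_i \rightharpoonup V$ in the varifold sense, where $V$ is an $n$-dimensional stationary integral varifold with $\|V\|(M) = L > 0$. (Note no mass is lost in the limit since the $V_i$ are stationary and a monotonicity/no-mass-loss argument applies; alternatively this is subsumed by the regularity conclusion below.)

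\emph{Step 2: Propagate property $(m)$ to the limit.} Fix $I(m) = 5^m$ distinct points $\set{p_j}$ with pairwise minimum distance $d > 0$. For each $i$, property~$(m)$ for $V_i$ gives an index $j(i) \in \set{1, \dots, I(m)}$ such that $V_i$ is stable in $B_{d/16}(p_{j(i)})$. Since there are finitely many choices, after passing to a further subsequence $j(i) \equiv j_0$ is constant, so every $V_i$ in this subsequence is stable in the fixed open ball $U := B_{d/16}(p_{j_0})$. A diagonal argument over a countable dense collection of configurations $\set{p_j}$ is \emph{not} needed here: once we fix the configuration we only need one ball, and the conclusion is about that configuration. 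However, to get a \emph{single} subsequence that works simultaneously for the varifold convergence and for this stability extraction, I first fix the subsequence from Step~1, then refine it once more as above.

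\emph{Step 3: Apply the Schoen--Simon compactness.} On the ball $U = B_{d/16}(p_{j_0})$, the sequence $\set{V_i}$ consists of singular minimal hypersurfaces with optimal regularity, of uniformly bounded volume, stable in $U$, converging to $V$. Proposition~\ref{prop:CPT0} then yields that $V$ is stable and optimally regular in $U$; in particular $V \lfloor U$ is a singular minimal hypersurface with optimal regularity, so $V$ is stable in at least one of $\set{B_{d/16}(p_j)}$. Since the configuration $\set{p_j}$ was arbitrary, $V$ has property~$(m)$.

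\emph{Step 4: Optimal regularity of $\mathrm{spt}(V)$ globally.} It remains to upgrade from "optimally regular in \emph{some} ball for each configuration" to "$\mathrm{spt}(V)$ is globally a singular minimal hypersurface with optimal regularity." Fix any point $q \in \mathrm{spt}(V)$. Choose $I(m) - 1$ further points far from $q$ and from each other (possible since $M$ is a closed manifold of dimension $\geq 2$, or simply since $\dim M \geq 1$ with infinitely many points); more carefully, for a suitable configuration $\set{p_j}$ with $q = p_{1}$ one arranges that $V$ fails to be stable in the balls around $p_2, \dots, p_{I(m)}$ — if this is impossible because $V$ is stable near many points, then $V$ is already optimally regular there and we instead directly use property~$(m)$ of each $V_i$ to force the good ball to be the one around $q$. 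The cleanest route: apply Lemma~\ref{lemma: AM}'s mechanism to $V$ itself — for any point $q$, take a configuration shrinking toward $q$; by property~$(m)$, $V$ is stable in some shrinking ball, and since all these balls except possibly one are forced to contain points where $V$ cannot be stable, we conclude $V$ is stable, hence regular, in arbitrarily small balls around $q$. Thus $V$ is stable in a punctured neighborhood of every point, and by Proposition~\ref{prop:CPT0} (applied with the constant sequence, or by Schoen--Simon directly) optimally regular everywhere, i.e. $\mathrm{dim}(\mathrm{Sing}(V)) \leq n-7$ and $\mathrm{spt}(V)$ is a singular minimal hypersurface with optimal regularity.

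The main obstacle I anticipate is Step~4 — the passage from the "configuration-wise" stability encoded in property~$(m)$ to genuine local regularity near an \emph{arbitrary} point of $\mathrm{spt}(V)$. One must argue that property~$(m)$, which only promises stability in \emph{one} of $I(m)$ balls, can be localized: given $q$, by choosing configurations $\set{p_j^{(k)}}$ with $p_1^{(k)} = q$ and $p_j^{(k)} \to q$ as $k \to \infty$ along the other indices as well (all mutually distinct with positive minimum distance), the "good ball" for each $k$ lies in a fixed neighborhood of $q$, and a limiting/covering argument combined with Proposition~\ref{prop:CPT0} gives regularity of $V$ near $q$. Making this localization precise — in particular ensuring that for \emph{every} $q$ one obtains stability on a genuine neighborhood rather than on a sequence of balls that might "rotate" among the $p_j$ — is the delicate point; everything else (boundedness, varifold compactness, finite pigeonholing) is routine.
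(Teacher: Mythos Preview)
Your Steps~1--3 are correct and match the paper's argument (the paper phrases Step~2 contrapositively: if $V$ were unstable in every $B_{d/16}(p_j)$, then by Proposition~\ref{prop:CPT0} each $V_i$ would eventually be unstable in every such ball, contradicting property~$(m)$ of $V_i$; your direct pigeonhole is equivalent).

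The gap is exactly where you suspect, Step~4, and your proposed routes there do not work. Trying to deduce global regularity from property~$(m)$ of the \emph{limit} $V$ alone is circular: Proposition~\ref{prop:CPT0} (Schoen--Simon) takes as input a sequence of hypersurfaces that already have optimal regularity, so you cannot apply it to the constant sequence $V$ before you know $V$ is optimally regular. And your ``shrinking configuration'' idea only ever produces stability on \emph{some} tiny ball near $q$, whose center may rotate among the $p_j$; you never get a fixed open neighborhood of $q$ on which a subsequence of the $V_i$ is stable, which is what Proposition~\ref{prop:CPT0} needs.

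The fix (and this is what the paper does) is to go back to the $V_i$, not $V$. Call $q\in M$ \emph{bad} if for every neighborhood $U$ of $q$ only finitely many $V_i$ are stable in $U$. If there were $I(m)$ bad points $p_1,\dots,p_{I(m)}$ with minimum mutual distance $d>0$, then in each $B_{d/16}(p_j)$ only finitely many $V_i$ are stable, so for $i$ large $V_i$ is unstable in all $I(m)$ balls simultaneously --- contradicting property~$(m)$ of $V_i$. Hence there are at most $I(m)-1$ bad points. At every other point there is a neighborhood in which a subsequence of the $V_i$ is stable, and Proposition~\ref{prop:CPT0} gives optimal regularity of $V$ there. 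The finitely many bad points contribute a set of Hausdorff dimension $0\leq n-7$ (here is where $n\geq 7$ is used), so $\dim(\mathrm{Sing}(V))\leq n-7$ globally.
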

        \begin{proof}
            By Allard compactness, we only need to check that $V$ has property $(m)$ and optimal regularity.

            Firstly, we show that $V$ has property $(m)$.

            Suppose not, and then we can find a set of $I_m$ concentric annuli $\set{A(p, r_j - s_j, r_j + s_j)}$ with the conditions mentioned in property $(m)$, such that $V$ is not stable in any annulus. By Proposition \ref{prop:CPT0}, we know that for each $j$, there is a positive integer $i_j>0$ such that when $i > i_j$, $V_i$ is not stable in each annulus. If we take $i_0 = \max_{j=1, \dots, I_m}\set{i_j}$, then $V_i$ is not stable in any one of $\set{A(p, r_j - s_j, r_j + s_j)}$ provided that $i > i_0$.

            This contradicts to the definition of $\mathcal{APR}(\Pi_i)$ that $V_i$ has property $(m)$.

            Next, to show the optimal regularity of $V$, note that the same argument above together with Proposition \ref{prop:CPT0} also implies that for any $p \in M$ and any $I_m$ concentric annuli $\set{A(p, r_j - s_j, r_j + s_j)}$ with the same properties mentioned above, $V$ is stable and of optimal regularity inside at least one of them. Thus, there exists a constant $r_p \in (0, \mathrm{inj}_M/2)$ depending only on $p$ such that $V$ is stable and of optimal regularity in $A(p, s, r_p)$ for any $s \in (0, r_p)$ (See Theorem 4.10 in \cite{pitts_existence_1981}). 

            By Theorem 3.1 in \cite{wickramasekera_general_2014} and the Remark (3) before it, we know that $V$ is of optimal regularity in the open ball $B(p, r_p)$. Since $M$ is compact, taking a finite open cover, one can easily show that $V$ is of optimal regularity in $M$.
        \end{proof}

    \subsection{Almgren-Pitts Realizations of $(p,m)$-width and $p$-width and Their Compactness}

        In \cite{xu_p;m-width_2016}, G. Xu defines $(p,m)$-width to be
        \begin{equation}
            \omega_{p,m}(M,g) = \inf_{\Phi\in \mathcal{P}_{p,m}}\max_{x \in \mathrm{dmn}(\Phi)} \mathbf{M}(\Phi(x)),
        \end{equation}
        where $\mathcal{P}_{p,m}$ is the set of mass-concentration-free sweepouts from a subcomplex of some $I(m, k)$ into $\mathcal{Z}_n(M; \mathbb{Z}_2)$ detecting $\bar\lambda^p$. And he also proved that when $m\geq 2p+1$, $\mathcal{P}_{p,m} \neq \emptyset$ and the $(p,m)$-width can be realized by a singular minimal hypersurface with optimal regularity.

        Note that the only difference between $p$-width and $(p,m)$-width is the domain of the sweepouts. Since $\mathcal{P}_{p,m}$ is \myhl{an} $m$-parameter homotopy family, the realization of $(p,m)$-width is just a corollary of the compactness property (Proposition \ref{prop:CPT}).
        
        \begin{corollary}[\cite{xu_p;m-width_2016}, Theorem 1.12]
            For $m\geq 2p+1$, there is a varifold $V$ such that $\|V\|(M) = \omega_{p,m}(M)$ and $\mathrm{spt}(V)$ is a singular minimal hypersurface with optimal regularity.
        \end{corollary}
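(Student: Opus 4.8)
The plan is to recognize $\mathcal{P}_{p,m}$ as an $m$-parameter homotopy family and then read the statement off Lemma~\ref{lemma: AM} together with the Schoen--Simon regularity theory; concretely, the corollary is nothing but the assertion that $\mathcal{APR}(\mathcal{P}_{p,m})$ is nonempty, specialized to this particular family.

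First I would verify the two defining conditions of an $m$-parameter homotopy family for $\mathcal{P}_{p,m}$. Every $\Phi \in \mathcal{P}_{p,m}$ is by construction a mass-concentration-free map whose domain is a cubical subcomplex of $I^m$, which is the first condition. For the second, observe that the property $\Phi^*(\bar\lambda^p) \neq 0$ depends only on the homotopy class of $\Phi$ with its domain fixed, because $\bar\lambda^p$ is a fixed class in $H^p(\mathcal{Z}_n(M^{n+1};\mathbb{Z}_2);\mathbb{Z}_2)$; hence any $\Phi' \simeq \Phi$ with no mass concentration again lies in $\mathcal{P}_{p,m}$. Next, the non-emptiness $\mathcal{P}_{p,m} \neq \emptyset$ for $m \geq 2p+1$ is exactly what G. Xu establishes in \cite{xu_p;m-width_2016} (building on L. Guth's sweepout constructions \cite{guth_minimax_2009}), so I would simply invoke it. Finally, $\mathbf{L}(\mathcal{P}_{p,m}) = \omega_{p,m}(M,g)$ is immediate from comparing the two definitions, the supremum over each (compact) domain being attained; and $\omega_{p,m}(M,g) > 0$ for $p \geq 1$ by the usual Lusternik--Schnirelmann lower bound (a $p$-sweepout with $p\ge 1$ cannot have arbitrarily small maximal mass; see \cite{xu_p;m-width_2016}, \cite{marques_existence_2017}).

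Granting these, I would take a minimizing sequence $\{\Phi_i\} \subset \mathcal{P}_{p,m}$ and pull it tight in the sense of Pitts \cite{pitts_existence_1981} and Marques--Neves \cite{marques_min-max_2014} to obtain a pulled-tight critical sequence $S$; since $\mathbf{L}(\mathcal{P}_{p,m}) > 0$, Lemma~\ref{lemma: AM} applies and furnishes a varifold $V \in \mathbf{C}(S)$ with property $(m)$. By the definition of $\mathbf{C}(S)$ we have $\|V\|(M) = \mathbf{L}(\mathcal{P}_{p,m}) = \omega_{p,m}(M)$, and $V$ is an $n$-dimensional stationary integral varifold. To promote property $(m)$ to optimal regularity, note that property $(m)$ forces the set $Z$ of points of $M$ admitting no neighborhood in which $V$ is stable to have at most $I(m)-1 = 5^m-1$ elements. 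Off $Z$, $V$ is a stable stationary integral varifold near every point, so by the Schoen--Simon theorem \cite{schoen_regularity_1981} (applicable since $n \geq 7$; see also \cite{wickramasekera_general_2014}) one has $\dim(\mathrm{Sing}(V) \setminus Z) \leq n-7$. As $Z$ is finite and $n - 7 \geq 0$, adjoining $Z$ does not raise the dimension, so $\dim \mathrm{Sing}(V) \leq n - 7$, i.e. $\mathrm{spt}(V)$ is a singular minimal hypersurface with optimal regularity. This $V$ proves the corollary.

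Almost all of the work here is imported. The genuinely substantive inputs are the non-emptiness $\mathcal{P}_{p,m}\neq\emptyset$ for $m \geq 2p+1$ (Xu) and the combinatorial selection of Lemma~\ref{lemma: AM} feeding into the Schoen--Simon compactness of Proposition~\ref{prop:CPT0}. The step I expect to require the most care is the final dimension count: one has to be sure that throwing the finitely many bad points of $Z$ back into the singular set does not break the bound $\dim \mathrm{Sing}(V) \leq n-7$ — which is precisely where the hypothesis $n \geq 7$ enters, and is why in the range $2 \leq n \leq 6$ one must instead appeal to the ``annuli-type'' refinement of Remark~\ref{rmk:Refine} rather than to property $(m)$ alone.
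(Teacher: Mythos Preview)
Your proposal is correct and follows the paper's approach: recognize $\mathcal{P}_{p,m}$ as an $m$-parameter homotopy family and invoke the nonemptiness of $\mathcal{APR}(\mathcal{P}_{p,m})$ via Lemma~\ref{lemma: AM} together with Schoen--Simon regularity, using $n\geq 7$ so that the finitely many ``bad'' points do not raise $\dim\mathrm{Sing}(V)$ above $n-7$. The paper's own justification is the single sentence preceding the corollary (citing Proposition~\ref{prop:CPT} and Remark~\ref{rmk:Refine}), so your write-up simply makes explicit the details the paper leaves implicit.
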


        It is obvious that
        \begin{equation}
            \omega_{p,2p+1}(M) \geq \omega_{p, 2p+2}(M)\geq \dots \geq \omega_{p,m}(M) \geq \dots \geq \omega_p(M).
        \end{equation}
        
        The question is whether for some $m \geq 2p+1$, we can have the equality between $\omega_{p,m}(M)$ and $\omega_p(M)$. Here, we would like to confirm this by a simple argument.

        \begin{proposition}
            $\omega_{p, 2p+1}(M,g) = \omega_p(M,g)$.
        \end{proposition}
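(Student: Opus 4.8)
The inequality $\omega_{p,2p+1}(M,g)\ge \omega_p(M,g)$ has already been observed (it is the last link in the displayed chain above), since a cubical subcomplex of $I^{2p+1}$ is, after triangulation, an admissible domain for $\mathcal{P}_p$, and the conditions ``detects $\bar\lambda^p$'' and ``no mass concentration'' are invariant under the triangulating homeomorphism. So the plan is to prove the reverse inequality $\omega_{p,2p+1}(M,g)\le \omega_p(M,g)$, and for this it suffices to show: for every $\Phi\in\mathcal{P}_p$ there is some $\Psi\in\mathcal{P}_{p,2p+1}$ with $\sup_x\mathbf{M}(\Psi(x))\le \sup_x\mathbf{M}(\Phi(x))$. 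Taking the infimum over $\Phi$ then yields the claim.

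The first step is to replace the a priori high-dimensional domain of $\Phi$ by a $p$-dimensional one. Let $\Phi:X\to\mathcal{Z}_n(M^{n+1},\mathbb{Z}_2)$ be a $p$-sweepout with no mass concentration. Since $\Phi^*(\bar\lambda^p)\ne 0$ in $H^p(X;\mathbb{Z}_2)$, working over the field $\mathbb{Z}_2$ there is a class $z\in H_p(X;\mathbb{Z}_2)$ with $\langle \Phi^*(\bar\lambda^p),z\rangle\ne 0$. I would represent $z$ by a simplicial $p$-cycle and let $P\subseteq X$ be the subcomplex consisting of the $p$-simplices occurring in that cycle together with all their faces; then $\dim P=p$, the cycle determines a class $z_P\in H_p(P;\mathbb{Z}_2)$ with $\iota_*z_P=z$ for the inclusion $\iota:P\hookrightarrow X$, and by naturality of the evaluation pairing $\langle (\Phi|_P)^*(\bar\lambda^p),z_P\rangle=\langle\Phi^*(\bar\lambda^p),z\rangle\ne 0$. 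Hence $(\Phi|_P)^*(\bar\lambda^p)=\bigl((\Phi|_P)^*\bar\lambda\bigr)^p\ne 0$, so $\Phi|_P$ is again a $p$-sweepout; it has no mass concentration because $\sup\{\mathbf{M}((\Phi|_P)(x)\cap B_r(y)):x\in P,\,y\in M\}\le \sup\{\mathbf{M}(\Phi(x)\cap B_r(y)):x\in X,\,y\in M\}$, and clearly $\sup_P\mathbf{M}(\Phi|_P)\le\sup_X\mathbf{M}(\Phi)$.

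The second step is to realize $P$ as a cubical subcomplex of $I^{2p+1}$. By the piecewise-linear Whitney embedding theorem (general position of the vertices), the finite $p$-dimensional complex $P$ embeds PL into $\mathbb{R}^{2p+1}$; after rescaling into the open unit cube and passing to a sufficiently fine cubical subdivision of $I^{2p+1}$, its image is PL-homeomorphic to a cubical subcomplex $Q\subseteq I^{2p+1}$. Writing $h:P\to Q$ for this homeomorphism, I would set $\Psi:=(\Phi|_P)\circ h^{-1}:Q\to\mathcal{Z}_n(M^{n+1},\mathbb{Z}_2)$. Then $\Psi$ is continuous, $\Psi^*(\bar\lambda^p)\ne 0$, $\Psi$ has no mass concentration, and $\sup_Q\mathbf{M}(\Psi)=\sup_P\mathbf{M}(\Phi|_P)\le\sup_X\mathbf{M}(\Phi)$; thus $\Psi\in\mathcal{P}_{p,2p+1}$ and $\omega_{p,2p+1}(M,g)\le\sup_X\mathbf{M}(\Phi)$. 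Taking the infimum over all $\Phi\in\mathcal{P}_p$ gives $\omega_{p,2p+1}(M,g)\le\omega_p(M,g)$, completing the argument.

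The only genuinely delicate point is the last step: converting the abstract PL embedding $P\hookrightarrow\mathbb{R}^{2p+1}$ into an honest cubical subcomplex of the standard cube $I^{2p+1}$ (rather than of some auxiliary polyhedron), which requires a mild but standard cubulation/subdivision argument, together with the verification that reparametrizing $\Phi|_P$ by a PL homeomorphism does not create mass concentration. Everything else is formal: naturality of $\mathbb{Z}_2$-cohomology and the cup product, and the monotone behavior of $\sup\mathbf{M}$ under restriction to a subcomplex and under homeomorphic reparametrization.
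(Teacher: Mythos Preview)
Your overall strategy matches the paper's: restrict $\Phi$ to a $p$-dimensional subcomplex on which $\bar\lambda^p$ still survives, embed that complex in $I^{2p+1}$, and convert the result into a cubical subcomplex. For the first step you pass to the carrier of a single simplicial $p$-cycle dual to $\Phi^*(\bar\lambda^p)$; the paper instead restricts to the full $p$-skeleton $X^{(p)}$ and uses the long exact sequence of the pair $(X,X^{(p)})$, together with $H^l(X,X^{(p)};\mathbb{Z}_2)=0$ for $l\le p$, to conclude that $H^p(X;\mathbb{Z}_2)\to H^p(X^{(p)};\mathbb{Z}_2)$ is injective. Both reductions are valid.

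The one real gap is the cubulation step. A PL-embedded $p$-complex in $I^{2p+1}$ is in general \emph{not} a subcomplex of any standard cubical subdivision $I(2p+1,k)$---the cells of such a subdivision are axis-parallel subcubes, and a generic affine $p$-simplex is neither one of these nor a union of them---so ``passing to a sufficiently fine cubical subdivision of $I^{2p+1}$'' does not by itself produce a cubical subcomplex $Q$ PL-homeomorphic to $P$, and this is not as routine as you suggest. The paper sidesteps the issue by \emph{thickening}: it takes a regular neighborhood $Y$ of the embedded $p$-complex in $I^{2p+1}$, a codimension-zero polyhedron with boundary which deformation retracts onto the image, and then invokes \cite{buchstaber_torus_2002} to identify $Y$ with the support of a cubical subcomplex of $I^{2p+1}$. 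The sweepout is pushed to $Y$ via the retraction, which preserves $\sup\mathbf{M}$, the detection of $\bar\lambda^p$, and the absence of mass concentration. Inserting this thickening step (in place of your direct claim that $P\cong Q$) would complete your argument.
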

        \begin{proof}
            Given a min-max sequence $S = \set{\Phi_i}_{i\in \mathbb{N}} \subset \mathcal{P}_p$ for $\omega_p$, denote $X^{(p)}_i$ as the $p$-dimensional skeleton of $X_i = \mathrm{dmn}(\Phi_i)$, and following the proof in Proposition 2.2 in \cite{irie_density_2018}, we have
            \begin{equation}
                H^l(X_i, X^{(p)}_i; \mathbb{Z}_2) = 0\,\, (l \leq p),
            \end{equation}
            and the exact sequence
            \begin{equation}
                \cdots \rightarrow H^{p}(X_i, X^{(p)}_i; \mathbb{Z}_2) \rightarrow H^{p}(X_i; \mathbb{Z}_2) \rightarrow H^{p}(X^{(p)}_i; \mathbb{Z}_2) \rightarrow H^{p+1}(X_i, X^{(p)}_i; \mathbb{Z}_2)\rightarrow \cdots
            \end{equation}
            The pullback map from $H^p(X_i; \mathbb{Z}_2)$ to $H^p(X^{(p)}_i;\mathbb{Z}_2)$ is injective so $\Phi_i|_{X^{(p)}_i}\in \mathcal{P}_p$. 

            Since
            \begin{equation}
                \omega_p(M,g) \leq \limsup_{i\rightarrow \infty}\set{\mathbf{M}(\Phi_i(x))|x \in X^{(p)}_i} \leq \limsup_{i\rightarrow \infty}\set{\mathbf{M}(\Phi_i(x))|x \in X_i} = \omega_p(M,g),
            \end{equation}
            we also have that
            \begin{equation}
                \limsup_{i\rightarrow \infty}\set{\mathbf{M}(\Phi_i(x))|x \in X^{(p)}_i} = \omega_p(M,g).
            \end{equation}
            As a consequence, we may assume that in the min-max sequence $S$, $\mathrm{dmn}(\Phi_i) = X^{(p)}_i$.

            \myhl{Now that $X^{(p)}_i$ is a finite $p$-dimensional simplicial complex, and thus is homeomorphic to the support of some cubical subcomplex of some $N$-dimensional cube $I^N$ (Chapter 4 \cite{buchstaber_torus_2002}). And we can take a canonical projection (a closed map) mapping $X^{(p)}_i$ into $I^{2p + 1}$ within $1/2$ of the interior. By the general position theorem for maps, Theorem 5.3 in \cite{rourke_introduction_1982}, with $P_0 = \emptyset$ and $P = X^{(p)}_i$ therein, there exists a piecewise-linear embedding map from $X^{(p)}_i$ to some triangulation $T$ of $I^{2p + 1}$ whose image has a distance, say, at least $1/6$ to $\partial I^{2p + 1}$.}
            
            Next, we can ``thicken'' $X^{(p)}_i$ in $I^{2p+1}$ to obtain a subcomplex $Y_i$ of some ${I(2p+1, k)}$ such that $X_i^{(p)}$ is a retract from $Y_i$, which induces a map $\Psi_i:Y_i \rightarrow \mathcal{Z}_n(M; \mathbb{Z}_2)$ by $\Psi_i(y) := \Phi_i(r(y))$. Moreover, $(\Psi_i)^*(\bar\lambda^p) = r^*\comp (\Phi_i)^*(\bar\lambda^p) \neq 0$, since $i^* \comp r^* = id^*$ implies that $r^*$ is an injection. It is also easy to see that $\max_{y \in Y_i}\set{\mathbf{M}(\Psi_i(y))} = \max_{x \in X_i}\set{\mathbf{M}(\Phi_i(x))}$.

            \myhl{Indeed, there exists a regular neighborhood $Z_i$ of $X^{(p)}_i$ in $T'$, a refinement of $T$ (\cite{rourke_introduction_1982} P.33). Moreover, by Corollary 3.30 in \cite{rourke_introduction_1982}, we also know that $X^{(p)}_i$ is a deformation retract of $Z_i$, and thus, we have a retraction $r: Z_i \rightarrow X^{(p)}_i$. Now, since $d = \mathrm{dist}(X^{(p)}_i, \partial Z_i)>0$, we can take a large enough interger $k = k(p, d)$ such that the subcomplex $Y_i \subset I(2p+1, k)$ with} 
            \begin{equation}
                Y_i(k) = \set{\alpha \in I(2p+1, k)| \exists \beta \in I(2p+1, k),\,\,s.t. \,\, \alpha<\beta,\,\, \beta\cap X^{(p)}_i \neq \emptyset},
            \end{equation}
            \myhl{satisfies that $X^{(p)}_i \subset Y_i \subset Z_i$. Apparently, $r|_{Y_i}$ is also a retraction from $Y_i$ to $X^{(p)}_i$, which confirms our assertion.}

            \myhl{In summary, $\omega_p(M,g)$ can be achieved by the sequence $\set{\Psi_i}\subset \mathcal{P}_{p,2p+1}$, so we have $\omega_p(M,g) \geq \omega_{p, 2p+1}(M,g)$ and hence $\omega_p(M,g) = \omega_{p, 2p+1}(M,g)$.}
        \end{proof}

        \begin{corollary}[Realization of $p$-width]
            Each $p$-width can be realized by a singular minimal hypersurface with optimal regularity and moreover, with property $(2p+1)$.
        \end{corollary}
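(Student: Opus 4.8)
The plan is to combine the previous Proposition with the compactness theory developed for $m$-parameter homotopy families. By the Proposition just proved, $\omega_p(M,g) = \omega_{p,2p+1}(M,g)$, and since $\mathcal{P}_{p,2p+1}$ is a $(2p+1)$-parameter homotopy family in the sense of our Definition, the quantity $\omega_p(M,g)$ equals $\mathbf{L}(\Pi)$ for $\Pi = \mathcal{P}_{p,2p+1}$. Assuming $\omega_p(M,g) > 0$ (which holds by the Weyl law / lower bound for $p$-widths, or directly from the fact that sweepouts detecting $\bar\lambda^p$ cannot have arbitrarily small mass), we are in the hypotheses of Lemma \ref{lemma: AM}, so $\mathcal{APR}(\mathcal{P}_{p,2p+1})$ is nonempty.

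The steps, in order, are as follows. First, invoke the Proposition to replace $\omega_p$ by $\omega_{p,2p+1}$ and identify the relevant homotopy family $\Pi = \mathcal{P}_{p,2p+1}$ with $m = 2p+1$. Second, take a pulled-tight critical sequence $S = \{\Phi_i\} \subset \Pi$; such a sequence exists because one can always pull tight a minimizing sequence (this is standard, and the interpolation results of Remark \ref{rmk:CTS} let us do it in the continuous setting). Third, apply Lemma \ref{lemma: AM} to produce $V \in \mathbf{C}(S)$ with $\|V\|(M) = \mathbf{L}(\Pi) = \omega_p(M,g)$ having property $(2p+1)$, namely stability in at least one of any $5^{2p+1}$ well-separated balls. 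Fourth, since $n > 6$, invoke the Schoen–Simon regularity theorem (as already noted in the paragraph following Lemma \ref{lemma: AM}) to conclude that $V$ — which is stable outside at most $I(2p+1)-1$ points — is a singular minimal hypersurface with optimal regularity; that is, $V \in \mathcal{APR}(\mathcal{P}_{p,2p+1})$. This $V$ is the desired realization.

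I expect the only genuinely delicate point to be the transfer of the almost-minimizing/stability conclusions between the discrete and continuous settings and the verification that the domains arising in $\mathcal{P}_{p,2p+1}$ (cubical subcomplexes of $I^{2p+1}$) fit Pitts' combinatorial machinery — but this is exactly what Remark \ref{rmk:CTS} and Lemma \ref{lemma: AM} are set up to handle, so it reduces to citing the interpolation theorems of Marques–Neves. Everything else is a direct concatenation: the equality of widths (previous Proposition), nonemptiness of $\mathcal{APR}$ (Lemma \ref{lemma: AM}), and the regularity upgrade in dimension $n > 6$ (Schoen–Simon, via Proposition \ref{prop:CPT0}). The positivity $\omega_p > 0$ should be remarked on explicitly, as it is needed to apply Lemma \ref{lemma: AM}.
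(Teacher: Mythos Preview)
Your proposal is correct and follows essentially the same approach as the paper: the corollary is not given an explicit proof there, since it follows immediately by combining the previous Proposition ($\omega_p = \omega_{p,2p+1}$) with the nonemptiness of $\mathcal{APR}(\mathcal{P}_{p,2p+1})$ established via Lemma~\ref{lemma: AM} and the Schoen--Simon regularity for $n>6$. Your write-up simply unpacks these ingredients in order, which is exactly what the paper intends.
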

        \begin{remark}
            When $2\leq n \leq 6$, this has been proved (\cite{irie_density_2018}, Proposition 2.2), where they used the upper bound of Morse index of minimal hypersurfaces from min-max construction \cite{marques_morse_2016}. However, when $n\geq 7$, without an adequate alternative of bumpy metrics defined in the singular setting, the technique in \cite{marques_morse_2016} to make a critical sequence bypass all minimal hypersurface with large Morse index could not be applied directly. Thus, the compactness result using \cite{sharp_compactness_2017} is still open.

            \myhl{Even worse, up to the author's knowledge, it is still open whether the minimal hypersurfaces from min-max construction has finite Morse index.}
        \end{remark}

        \begin{definition}
            We define the \textbf{Almgren-Pitts realizations of $p$-width} $\mathcal{APR}_p(M,g)$ to be $\mathcal{APR}(\mathcal{P}_{p, 2p+1})$.
        \end{definition}

        Now we have the following compactness for varying metrics.
        \begin{proposition}[Compactness of $\mathcal{APR}_p$ for Varying Metrics]\label{prop:CPTvary}
            Given a smooth closed manifold $M$ and $C^\infty$ metrics $\set{g_i}$ and $g$ such that
            \begin{equation}
                g_i \xrightarrow{C^3} g,
            \end{equation}
            and $V_i \in \mathcal{APR}_p(M, g_i)$ for some $p > 0$, there is a subsequence of $\set{V_i}$ (still denoted by $\set{V_i}$) such that 
            \begin{equation}
                V_i \rightharpoonup V,
            \end{equation}
            in the varifold sense. Moreover, $V \in \mathcal{APR}_p(M,g)$.
        \end{proposition}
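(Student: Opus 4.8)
The plan is to follow the scheme of the proof of Proposition~\ref{prop:CPT}, the only genuinely new feature being that the ambient metric now varies. Thus I would run the Federer--Fleming and Schoen--Simon compactness in the varying-metric form of Remark~\ref{rmk:CPTvary}, while bookkeeping the small discrepancy between distances measured with $g$ and with $g_i$. The (mild) obstacle is exactly this bookkeeping: the varying-metric version of Proposition~\ref{prop:CPT0} can only be applied to open sets that are \emph{fixed}, i.e., independent of $i$, so the balls $B_{d/16}(q_j)$ produced by ``property $(m)$'' must first be shrunk to slightly smaller fixed balls.

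First I would pin down the mass. The $p$-width is Lipschitz continuous in the metric: if $(1+\delta)^{-1}g\le g'\le(1+\delta)g$ then $\mathbf{M}_{g'}$ and $\mathbf{M}_g$ differ by a factor $\le(1+\delta)^{n/2}$, hence so do $\omega_p(M,g')$ and $\omega_p(M,g)$. Since $g_i\to g$ in $C^0$ we get $\omega_p(M,g_i)\to\omega_p(M,g)=:L\in\mathbb{R}^+$, so $0<\inf_i\|V_i\|(M)\le\sup_i\|V_i\|(M)<\infty$. By weak-$*$ compactness of Radon measures on the compact Grassmann bundle $G_n(M)$, after passing to a subsequence $V_i\rightharpoonup V$; testing against the constant function $1$ (admissible because $G_n(M)$ is compact) gives $\|V\|(M)=\lim_i\|V_i\|(M)=L=\omega_p(M,g)$. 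Moreover, since $g_i\to g$ in $C^1$ the first variation of $V_i$ with respect to $g$ tends to $0$, so by the standard compactness for integral varifolds with asymptotically vanishing first variation, $V$ is an $n$-dimensional stationary integral varifold with respect to $g$; in particular Proposition~\ref{prop:CPT0} and Remark~\ref{rmk:CPTvary} apply to $V_i\rightharpoonup V$.

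Next, $V$ has property $(2p+1)$. Suppose not: there are $I(2p+1)=5^{2p+1}$ distinct points $\set{q_j}$ with minimal pairwise $g$-distance $d>0$ such that $V$ is unstable in every $g$-ball $B_{d/16}(q_j)$. Each instability is witnessed by a variation compactly supported in the corresponding ball, so there is a fixed $\rho<d/16$ with $V$ unstable in each fixed open set $W_j:=B^{g}_{\rho}(q_j)$. Applying the contrapositive of the varying-metric Proposition~\ref{prop:CPT0} along every subsequence (as in the proof of Proposition~\ref{prop:CPT}): for all large $i$, $V_i$ is $g_i$-unstable in each $W_j$. On the other hand, writing $d_i$ for the minimal pairwise $g_i$-distance among $\set{q_j}$, the $C^0$-convergence $g_i\to g$ yields $d_i\ge(1-\epsilon_i)d$ and $W_j=B^{g}_{\rho}(q_j)\subseteq B^{g_i}_{d_i/16}(q_j)$ once $\epsilon_i$ is small, because $\rho/(d/16)<1$. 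Hence for all large $i$, $V_i$ is $g_i$-unstable in \emph{every} $B^{g_i}_{d_i/16}(q_j)$, contradicting that $V_i\in\mathcal{APR}_p(M,g_i)$ has property $(2p+1)$.

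Finally, $V$ is optimally regular, by the same bookkeeping. By Proposition~\ref{prop:CPT0} (Remark~\ref{rmk:CPTvary}) it is enough to show that all but at most $I(2p+1)-1$ points of $M$ admit a fixed neighborhood in which some subsequence of $\set{V_i}$ is stable --- the remaining finite set has dimension $0\le n-7$, and a countable union of sets of dimension $\le n-7$ still has dimension $\le n-7$. If this failed, pick $I(2p+1)$ points $\set{q_j}$ none of which has such a neighborhood, i.e., for each $q_j$ and each fixed neighborhood $U\ni q_j$ one has $V_i$ $g_i$-unstable in $U$ for all large $i$. Taking $U=B^{g}_{d/32}(q_j)$, which satisfies $U\subseteq B^{g_i}_{d_i/16}(q_j)$ for $i$ large by the same comparison, we again find $V_i$ $g_i$-unstable in every $B^{g_i}_{d_i/16}(q_j)$, contradicting property $(2p+1)$ of $V_i$. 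Combining the three steps: $\|V\|(M)=\omega_p(M,g)$, $V$ is a singular minimal hypersurface with optimal regularity, and $V$ has property $(2p+1)$; that is, $V\in\mathcal{APR}_p(M,g)$.
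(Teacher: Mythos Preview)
Your proof is correct and follows essentially the same route as the paper: continuity of the $p$-width to identify the mass, then the varying-metric Schoen--Simon compactness (Remark~\ref{rmk:CPTvary}) combined with the contradiction argument of Proposition~\ref{prop:CPT} to obtain property~$(2p+1)$ and optimal regularity. The paper's proof is much terser---it simply cites the continuity of $\omega_p$ from \cite{irie_density_2018} and says the rest is ``verbatim'' from Proposition~\ref{prop:CPT} via Remark~\ref{rmk:CPTvary}---whereas you explicitly carry out the bookkeeping comparing $g$-balls to $g_i$-balls (shrinking $B_{d/16}$ to a fixed $B_\rho$ so that the varying-metric compactness applies on a fixed open set), a detail the paper leaves implicit.
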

        \begin{proof}
            From the continuity of $p$-width (\cite{irie_density_2018}, Lemma 2.1), we have that $\|V\|(M) = \omega_p(M,g)$. With Remark \ref{rmk:CPTvary}, the proof that $V$ has optimal regularity and property $(2p+1)$ is simply verbatim of the proof of Proposition \ref{prop:CPT}.
        \end{proof}

\section{Proof of Main Theorem}

    For any open subset $U\subset M$, we define
    \begin{equation}
         \mathcal{M}_{U,p} := \set{g \in \Gamma_\infty(M)| \forall V \in \mathcal{APR}_p(M,g),\, \|V\|(U) > 0},
    \end{equation}
    where $\Gamma_\infty(M)$ is the set of all smooth Riemannian metrics on $M$ and let
    \begin{equation}
        \mathcal{M}_U := \bigcup^\infty_{p = 1} \mathcal{M}_{U, p}.
    \end{equation}

    \begin{proposition}
        $\mathcal{M}_{U,p}$ is an open subset of $\Gamma_\infty(M)$ for any open subset $U$, and so is $\mathcal{M}_U$.
    \end{proposition}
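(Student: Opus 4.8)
The plan is to argue by contradiction, using the compactness of $\mathcal{APR}_p$ for varying metrics (Proposition \ref{prop:CPTvary}) as the sole nontrivial input. Suppose $\mathcal{M}_{U,p}$ is not open. Since $\Gamma_\infty(M)$ with its $C^\infty$ topology is metrizable, there exist a metric $g \in \mathcal{M}_{U,p}$ and a sequence $g_i \to g$ in $\Gamma_\infty(M)$ with $g_i \notin \mathcal{M}_{U,p}$ for every $i$. Unwinding the definition of $\mathcal{M}_{U,p}$, for each $i$ we may choose $V_i \in \mathcal{APR}_p(M, g_i)$ with $\|V_i\|(U) = 0$.

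Because $C^\infty$ convergence implies $C^3$ convergence, the hypotheses of Proposition \ref{prop:CPTvary} are met: after passing to a subsequence (not relabelled), $V_i \rightharpoonup V$ in the varifold sense with $V \in \mathcal{APR}_p(M, g)$. Since $U$ is open and varifold convergence forces weak-$*$ convergence of the associated weight measures, lower semicontinuity of Radon measures on open sets yields
\begin{equation}
    \|V\|(U) \le \liminf_{i\to\infty} \|V_i\|(U) = 0,
\end{equation}
hence $\|V\|(U) = 0$. But $V \in \mathcal{APR}_p(M,g)$ together with $g \in \mathcal{M}_{U,p}$ forces $\|V\|(U) > 0$, a contradiction. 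Therefore $\mathcal{M}_{U,p}$ is open. Finally, $\mathcal{M}_U = \bigcup_{p=1}^\infty \mathcal{M}_{U,p}$ is a union of open sets and hence open.

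There is no real obstacle here beyond invoking Proposition \ref{prop:CPTvary} correctly; the argument is the standard ``limit of counterexamples'' packaging combined with lower semicontinuity of mass on open sets. The only point deserving a word of care is the matching of topologies: the notion of convergence in $\Gamma_\infty(M)$ must be strong enough for the varying-metric compactness to apply, which is automatic since the $C^\infty$ topology dominates the $C^3$ topology used in Proposition \ref{prop:CPTvary} (cf. Remark \ref{rmk:CPTvary}).
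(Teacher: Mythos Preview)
Your proof is correct and follows essentially the same approach as the paper: both argue by contradiction, take a sequence $g_i \to g$ with witnesses $V_i \in \mathcal{APR}_p(M,g_i)$ satisfying $\|V_i\|(U)=0$, apply Proposition \ref{prop:CPTvary} to extract a limit $V \in \mathcal{APR}_p(M,g)$, and conclude via lower semicontinuity of mass on the open set $U$. Your write-up is in fact slightly more careful about justifying the topology match and the semicontinuity step.
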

    \begin{proof}
        Given $g_0 \in \mathcal{M}_{U,p}$, we would like to show that there is an $\delta > 0$ such that $B_\delta(g_0, C^3)\cap \Gamma_\infty(M) \subset \mathcal{M}_{U,p}$. 

        Suppose not, there will be a sequence $g_i \in \Gamma_\infty(M)$ such that $g_i \xrightarrow{C^3} g_0$ but $g_i \notin \mathcal{M}_{U, p}$. Therefore, we can choose a sequence $\set{V_i}$ such that $V_i \in \mathcal{APR}_p(M,g_i)$ but $V_i(U) = 0$. From Proposition \ref{prop:CPTvary}, up to a subsequence,
        \begin{equation}
            V_i \rightharpoonup V,
        \end{equation}
        where $V \in \mathcal{APR}_p(M,g_0)$.

        Since $U$ is open, $\|V\|(U) \leq \lim_{i\rightarrow \infty}\|V_i\|(U) = 0$ which gives a contradiction.
    \end{proof}

    Now we define $\mathcal{M}_f$ to be the set of metrics on $M$ where there are only finitely many singular minimal hypersurfaces of optimal regularity w.r.t. that metric.

    \begin{lemma}[Key Lemma]\label{lemma:KEY}
        For any open subset $\mathcal{O}$ of $\Gamma_\infty$, if $\mathcal{M}_f$ is dense in $\mathcal{O}$, then $\mathcal{M}_{U}$ is both open and dense in $\mathcal{O}$ as well. Thus, $\mathcal{M}_f\cap \mathcal{O}$ is a meagre set inside $\mathcal{O}$.
    \end{lemma}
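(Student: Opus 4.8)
The plan is to run the Irie–Marques–Neves perturbation argument from Proposition 3.1 of \cite{irie_density_2018}, but in the singular setting and relative to the open set $\mathcal{O}$. Fix an open set $U \subset M$ and an open $\mathcal{O} \subset \Gamma_\infty$ in which $\mathcal{M}_f$ is dense. Openness of $\mathcal{M}_U$ (hence of $\mathcal{M}_U \cap \mathcal{O}$) is already established in the preceding proposition, so the whole content is \emph{density} of $\mathcal{M}_U$ in $\mathcal{O}$; once we have this, $\mathcal{O} \setminus \mathcal{M}_U$ is closed with empty interior in $\mathcal{O}$, i.e.\ nowhere dense, and since $\mathcal{M}_f \cap \mathcal{O} \subset (\mathcal{O}\setminus\mathcal{M}_U) \cup (\mathcal{M}_f \setminus \mathcal{M}_U)$ \dots actually the cleaner route is: $\mathcal{M}_f \cap \mathcal{M}_U = \emptyset$ (if there are only finitely many optimally regular minimal hypersurfaces, none can have positive mass in \emph{every} open set, since one can find an open set disjoint from all of them — here one uses that each is a closed set of dimension $n < n+1$), so $\mathcal{M}_f \cap \mathcal{O} \subset \mathcal{O} \setminus \mathcal{M}_U$, which is nowhere dense, giving meagreness of $\mathcal{M}_f \cap \mathcal{O}$ at once.

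So the task reduces to: given $g \in \mathcal{O}$ and $\varepsilon > 0$, produce $g' \in \mathcal{M}_U$ with $\|g' - g\|_{C^\infty\text{-nbhd}} < \varepsilon$, $g' \in \mathcal{O}$. Here is where $\mathcal{M}_f$ enters as the "starting point". First, by density of $\mathcal{M}_f$ in $\mathcal{O}$, pick $\tilde g \in \mathcal{M}_f \cap \mathcal{O}$ within $\varepsilon/2$ of $g$; it suffices to approximate $\tilde g$ by elements of $\mathcal{M}_U \cap \mathcal{O}$. Since $\tilde g \in \mathcal{M}_f$, there are only finitely many optimally regular minimal hypersurfaces for $\tilde g$, hence (as above) an open set $W \subset U$ meeting none of them. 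Now perturb the metric inside $W$: choose a metric $g_t = \tilde g + t\,h$ with $h$ supported in $W$, $h \geq 0$ enlarging volume strictly in $W$. The key point, exactly as in \cite{irie_density_2018}, is that $t \mapsto \omega_p(M, g_t)$ is monotone nondecreasing and, by the Weyl law for volume spectra (\cite{liokumovich_weyl_2018}), the $\omega_p$ are unbounded; so for the perturbed metric the widths strictly increase on a set of $p$'s of full density in the following sense: for arbitrarily small $t>0$ there exists $p$ with $\omega_p(M,g_t) > \omega_p(M,\tilde g)$. For such a $p$, any $V \in \mathcal{APR}_p(M,g_t)$ must have $\|V\|(W) > 0$: otherwise $V$ would be a $\tilde g$-stationary optimally regular varifold (the metrics agree off $W$) of mass $\omega_p(M,\tilde g)$, contradicting the strict increase — this is the monotonicity-plus-domain-monotonicity trick of \cite{irie_density_2018}, and it uses that $\mathcal{APR}_p$ is realized (our Corollary) and that the realization varies continuously/monotonically. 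Hence $g_t \in \mathcal{M}_{U,p} \subset \mathcal{M}_U$, and for $t$ small $g_t$ stays in $\mathcal{O}$ and within $\varepsilon$ of $g$.

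The step I expect to be the main obstacle — and the place where "optimal regularity" and the lack of bumpy metrics bite — is justifying the strict inequality $\|V\|(W) > 0$ for the perturbed realization. In the smooth range \cite{irie_density_2018} one argues that if $\|V\|(W)=0$ then $V$ is minimal for \emph{both} metrics and its mass equals both $p$-widths, so $\omega_p(M,g_t)=\omega_p(M,\tilde g)$, contradicting strict monotonicity of the width along $g_t$; the only thing that needs care here is (i) that $V \in \mathcal{APR}_p(M,g_t)$ indeed has $\|V\|(M) = \omega_p(M,g_t)$ — this is our Corollary on realization of $p$-width — and (ii) that a $g_t$-stationary integral varifold of optimal regularity supported off $W$ is automatically $\tilde g$-stationary of the same mass, which is immediate since $g_t \equiv \tilde g$ on $M \setminus W$ and stationarity is local, and its mass is then $\geq \omega_p(M,\tilde g)$ while $\leq \omega_p(M,g_t)$; strict monotonicity of $t \mapsto \omega_p(M,g_t)$ for a suitable choice of $p$ (guaranteed by the Weyl law, exactly as in \cite{irie_density_2018}) closes the loop. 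One must also make sure that in the singular setting the varifolds in $\mathcal{APR}_p$ genuinely count as "singular minimal hypersurfaces of optimal regularity" so that their mere existence contradicts $\tilde g \in \mathcal{M}_f$ once $\|V\|(W)>0$ — but this is built into the definition of $\mathcal{APR}_p$ via Lemma~\ref{lemma: AM} and the Schoen–Simon regularity theory. Modulo these checks, which are routine given the compactness machinery of Section 3, the argument is a faithful transcription of Proposition 3.1 of \cite{irie_density_2018}.
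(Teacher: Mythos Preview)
There is a genuine gap in the perturbation step. You assert that if $V \in \mathcal{APR}_p(M, g_t)$ has $\|V\|(W) = 0$, then $V$ is a $\tilde g$-stationary varifold ``of mass $\omega_p(M,\tilde g)$'', contradicting the strict increase. But the mass of $V$ is $\omega_p(M, g_t)$, not $\omega_p(M, \tilde g)$; the mere fact that $V$ is $\tilde g$-stationary does not force its mass to equal, or even be bounded above by, the $\tilde g$-width. Your later sandwich ``its mass is then $\geq \omega_p(M,\tilde g)$ while $\leq \omega_p(M,g_t)$'' is perfectly consistent with the strict inequality $\omega_p(M, \tilde g) < \omega_p(M, g_t)$ and yields no contradiction whatsoever. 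Choosing $W$ disjoint from the finitely many $\tilde g$-minimal hypersurfaces does not help: it only tells you $V$ is supported on those hypersurfaces, which is compatible with $\|V\|(W)=0$. The paper closes the loop differently: since $\tilde g \in \mathcal{M}_f$, the set
\[
\mathcal{C} = \Big\{\textstyle\sum_j m_j \,\mathrm{vol}_{\tilde g}(\Sigma_j) : m_j \in \mathbb{N},\ \Sigma_j\ \tilde g\text{-minimal, optimally regular}\Big\}
\]
is \emph{countable}, and one chooses $t_1$ and $p$ (via the Weyl law, as in \cite{irie_density_2018}) so that $\omega_p(M, g_{t_1}) \notin \mathcal{C}$. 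Then if $\|V\|(U)=0$, $V$ is $\tilde g$-minimal and its mass $\omega_p(M, g_{t_1})$ must lie in $\mathcal{C}$, which is the contradiction. This ``avoid the countable value set'' step is precisely where the hypothesis $\tilde g \in \mathcal{M}_f$ replaces bumpiness, and it is absent from your argument.

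A smaller slip: your meagreness deduction claims $\mathcal{M}_f \cap \mathcal{M}_U = \emptyset$ for the \emph{fixed} $U$, but this is false (take $U=M$, or any $U$ large enough to meet all finitely many hypersurfaces). What is true is that for each $g \in \mathcal{M}_f$ there exists \emph{some} open set missing all its minimal hypersurfaces; so one must take a countable basis $\{U_i\}$ and conclude $\mathcal{M}_f \cap \mathcal{O} \subset \bigcup_i (\mathcal{O} \setminus \mathcal{M}_{U_i})$, a countable union of nowhere dense sets --- which is exactly what the paper does.
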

    \begin{remark}
        As we will see in the proof, $\mathcal{M}_f$ plays the same role as bumpy metrics in the proof of Proposition 3.1 in \cite{irie_density_2018}.
    \end{remark}
    \begin{proof}
        Fix $U$ as an open subset of $M$. For any $g$ in $\mathcal{O}$, from the denseness of $\mathcal{M}_f$, there is a $g'\in \mathcal{M}_f$ such that $g'$ is arbitrarily close to $g$. Now, if $g' \in \mathcal{M}_{U}$ then we are done.

        Suppose that $g' \notin \mathcal{M}_U$. We can follow the proof of Proposition 3.1 in \cite{irie_density_2018}, since now the set
        \begin{equation}
            \begin{aligned}
                \mathcal{C} = &\set{\sum^N_{j = 1} m_j \mathrm{vol}_{g'}(\Sigma_j): N \in \mathbb{N}, \set{m_j}^N_{j=1}\subset \mathbb{N}, \set{\Sigma_j}^N_{j=1} \\
                 &\text{ singular minimal hypersurfaces with optimal regularity}},
            \end{aligned}
        \end{equation}
        is countable and thus has empty interior.

        Let $h$ be a smooth nonnegative function with $\mathrm{spt}(h)\subset U$ and $h(x) > 0$ for some $x \in U$. Let $g'(t) = (1 + th) g'$. Since $\mathcal{O}$ is open, there is a $t_0 > 0$ such that $\set{g'(t)|0\leq t \leq t_0}\subset \mathcal{O}$. Moreover, using the same argument in Proposition 3.1 in \cite{irie_density_2018}, there exists a $t_1\in(0, t_0)$ arbitrarily small and $p = p(t_0) \in \mathbb{N}$ such that $\omega_p(M, g'(t_1)) > \omega_p(M, g')$ and $\omega_p(M, g'(t_1))\notin \mathcal{C}$. Now it suffices to show that $g'(t_1)\in \mathcal{M}_{U,p}$.

        Suppose not, we can find $V \subset \mathcal{APR}_p(M, g'(t_1))$ such that $\|V\|(U) = 0$. Note that $g'(t_1) = g'$ outside $U$ so we have 
        \begin{equation}
             \|V\|(M) = \sum^N_{j = 1} m_j \mathrm{vol}_{g'(t_1)}(\Sigma_j) = \sum^N_{j = 1} m_j \mathrm{vol}_{g'}(\Sigma_j)\in \mathcal{C},
        \end{equation}
        where $\set{\Sigma_j}$ is a finite set of singular minimal hypersurfaces with optimal regularity with respect to both $g'(t_1)$ and $g'$. This gives a contradiction.

        Let $\set{U_i}$ be a countable basis of $M$, then $\mathcal{M} = \bigcap\mathcal{M}_{U_i}$ is of second Baire category in $\mathcal{O}$ so $\mathcal{M}_f \cap \mathcal{O}\subset \mathcal{M}^C \cap \mathcal{O}$ is a meagre set.
    \end{proof}

    \begin{proof}[Proof of Main Theorem]
        Let $\mathcal{O} = \mathrm{Int}(\overline{\mathcal{M}_f})$ and it is easy to see that $\mathcal{M}_f \subset (\mathcal{M}_f \cap \mathcal{O}) \bigcup \partial(\overline{\mathcal{M}_f})$. From Lemma \ref{lemma:KEY}, we know that $\mathcal{M}_f\cap \mathcal{O}$ is meagre. Since $\partial(\overline{\mathcal{M}_f})$ is nowhere dense, $\mathcal{M}_f$ is also meagre.
    \end{proof}

    \begin{remark}
        In the Key Lemma \ref{lemma:KEY}, we only use the fact that $\mathcal{C}$ is a set with empty interior. Thus, if $\mathcal{M}_{ei}$ is the set of metrics where $\mathcal{C}$ has empty interior and $\mathcal{O}_{ei} = \mathrm{Int}\overline{\mathcal{M}_{ei}}$, we also have that $\mathcal{M}$ is of second Baire category in $\mathcal{O}_{ei}$. As a consequence, we have the denseness of singular minimal hypersurfaces with optimal regularity in generic metrics inside $\mathcal{O}_{ei}$.
    \end{remark}

    \begin{corollary}\label{cor:end}
        For a closed manifold $M^{n+1}(2 \leq n \leq 6)$ with a $C^\infty$-generic metric $g$, the union of minimal hypersurfaces in $\mathcal{APR}(M, g):= \bigcup^\infty_{p = 1} \mathcal{APR}_p(M, g)$, i.e., the minimal hypersurfaces realizing min-max widths, is a dense subset of $M$.
    \end{corollary}

    \begin{proof}
        \myhl{By Theorem 2.7 in \cite{white_bumpy_2017} (an analogue could be refered to Theorem 9 in \cite{ambrozio_compactness_2017-1}), the set of bumpy metrics is generic in $\Gamma_\infty(M)$. It follows from Sharp's compactness theorem (\cite{sharp_compactness_2017} Theorem 2.3 and Remark 2.4) that the bumpy metric belongs to $\mathcal{M}_{ei}$ and therefore, $\mathcal{M}_{ei}$ is also generic and thus dense in $\Gamma_\infty(M)$. From the remark above, we know that the Key Lemma leads to the conclusion.}
    \end{proof}

    Parallel to the existence of bumpy metrics when $2 \leq n \leq 6$, we have the following conjecture.

    \begin{conjecture}\label{conj:empty_interior}
        $\mathcal{M}_{ei}$ is dense in $\Gamma_\infty(M)$. 
    \end{conjecture}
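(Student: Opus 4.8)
The natural line of attack is to reduce the conjecture to a \emph{bumpy metrics theorem in the singular setting}: it suffices to produce a dense family of metrics for which there are only countably many singular minimal hypersurfaces with optimal regularity, since then $\mathcal{C}$ is a countable subset of $\mathbb{R}$ and hence has empty interior. In the range $2\leq n\leq 6$ this is exactly the mechanism by which White's genericity theorem \cite{white_space_1991}, combined with the Sharp compactness theorem \cite{sharp_compactness_2015}, forces local finiteness of the moduli space; the plan is to carry out the analogous argument with the smooth regularity theory replaced by the Schoen--Simon / Wickramasekera regularity theory for stationary integral varifolds with $\mathrm{codim}(\mathrm{Sing})\geq 7$.

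First I would set up the universal moduli space $\mathcal{N}=\set{(g,\Sigma): \Sigma \text{ is a singular minimal hypersurface of optimal regularity for } g}$, which requires a workable notion of nondegeneracy: the Jacobi operator on the regular part $\mathrm{Reg}(\Sigma)$, acting on sections suitably controlled near $\mathrm{Sing}(\Sigma)$ (e.g.\ finite weighted energy, in the spirit of the analysis of Simons-type cones), should have trivial kernel. The delicate point here is functional-analytic: because the singular set has codimension at least $7$, removable-singularity and capacity estimates suggest the relevant Sobolev space behaves like the one on a closed manifold, so that the Jacobi operator is Fredholm of index $0$ and $(g,\Sigma)\mapsto g$ is a Fredholm map of index $0$; but turning this heuristic into a genuine Banach-manifold structure on $\mathcal{N}$ demands uniform estimates near $\mathrm{Sing}(\Sigma)$ that are not available off the shelf.

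Granting the Banach-manifold structure and Fredholmness, the Sard--Smale theorem yields a residual — hence dense — set of metrics $g$ for which the fibre $\pi^{-1}(g)$ is a $0$-dimensional manifold, i.e.\ every singular minimal hypersurface of optimal regularity is nondegenerate. The final ingredient would be a compactness statement: a sequence of such hypersurfaces with bounded volume, together with a mechanism preventing accumulation, so that for a nondegenerate metric $\pi^{-1}(g)$ is discrete; combined with a volume exhaustion this gives countability of the whole moduli space, hence of $\mathcal{C}$, hence $g\in\mathcal{M}_{ei}$.

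I expect the main obstacle to be precisely this last compactness input. As the remark preceding the conjecture records, the singular analogue of the Sharp compactness theorem \cite{sharp_compactness_2015} for minimal hypersurfaces with bounded index is open, and it is not even known that the min-max hypersurfaces constructed here have finite index, so a continuum of minimal hypersurfaces at a single volume level cannot yet be excluded. A possible partial route is to bypass index bounds altogether using the stability-type property $(m)$ of Lemma \ref{lemma: AM} together with Proposition \ref{prop:CPT0} (or the eigenvalue-bounded compactness of A. Dey \cite{dey_compactness_2019}), which might show $\mathcal{APR}_p(M,g)$ is generically finite for each $p$; but since $\mathcal{C}$ is built from \emph{all} singular minimal hypersurfaces, not merely the min-max ones, this would not close the argument, and a genuinely new compactness theory in the singular setting appears unavoidable.
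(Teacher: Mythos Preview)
The statement is presented in the paper as an open \emph{conjecture}, not a theorem: there is no proof in the paper to compare against. The paper merely records, immediately afterward, that the stronger Conjecture~\ref{conj:countable} (density of $\mathcal{M}_c$, the metrics with only countably many singular minimal hypersurfaces of optimal regularity) would imply it.

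Your proposal is exactly this reduction to Conjecture~\ref{conj:countable}, together with a sketch of how one might attack the latter via a singular-setting analogue of White's bumpy metrics theorem plus a Sharp-type compactness result. You correctly identify, and do not conceal, the two genuine gaps: (i) the Banach-manifold/Fredholm structure on the moduli space of optimally regular singular minimal hypersurfaces has not been established, and (ii) the compactness input needed to pass from nondegeneracy to local finiteness (hence countability) is, as the paper's own remarks stress, open --- indeed even finiteness of the Morse index of the min-max hypersurfaces is not known. These are not routine technicalities; they are the substance of the problem. What you have written is therefore a coherent research programme, fully aligned with the paper's own suggested route, but it is not a proof, and no proof is currently available.
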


    \begin{corollary}
        If Conjecture \ref{conj:empty_interior} above holds, singular minimal hypersurfaces with optimal regularity in $M$ with generic metrics are dense.
    \end{corollary}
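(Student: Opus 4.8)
The plan is to feed Conjecture~\ref{conj:empty_interior} into the machinery already assembled for the Main Theorem, in particular the observation recorded in the Remark following its proof, which notes that the proof of the Key Lemma~\ref{lemma:KEY} uses nothing about $\mathcal{M}_f$ beyond the fact that, for a metric in it, the associated volume set $\mathcal{C}$ has empty interior. First I would record the elementary topological consequence of the conjecture: if $\mathcal{M}_{ei}$ is dense in $\Gamma_\infty(M)$, then $\overline{\mathcal{M}_{ei}} = \Gamma_\infty(M)$, hence $\mathcal{O}_{ei} = \mathrm{Int}(\overline{\mathcal{M}_{ei}}) = \Gamma_\infty(M)$; moreover $\mathcal{M}_{ei}$ is then trivially dense in $\mathcal{O}_{ei}$.

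Next I would run the Key Lemma argument with $\mathcal{M}_{ei}$ in place of $\mathcal{M}_f$ and with $\mathcal{O} = \mathcal{O}_{ei} = \Gamma_\infty(M)$. Fix a countable basis $\set{U_i}$ of $M$. For each $i$, the set $\mathcal{M}_{U_i}$ is open by the Proposition preceding the Key Lemma, and by the (modified) Key Lemma it is dense in $\mathcal{O}_{ei} = \Gamma_\infty(M)$: given $g$ and a neighborhood of it, one picks $g' \in \mathcal{M}_{ei}$ nearby, uses that $\mathcal{C}(g')$ has empty interior to find a conformal perturbation $g'(t_1)$ supported in $U_i$ with $\omega_p(M, g'(t_1)) > \omega_p(M, g')$ and $\omega_p(M, g'(t_1)) \notin \mathcal{C}(g')$, and concludes $g'(t_1) \in \mathcal{M}_{U_i, p} \subset \mathcal{M}_{U_i}$ exactly as in the proof of Lemma~\ref{lemma:KEY}. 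Consequently $\mathcal{M} = \bigcap_i \mathcal{M}_{U_i}$ is a countable intersection of open dense subsets of the Baire space $\Gamma_\infty(M)$, hence residual, hence a (Baire sense) generic set.

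Finally I would extract the density statement. Let $g \in \mathcal{M}$. For every $i$ there is $p = p(i)$ with $g \in \mathcal{M}_{U_i, p}$; since $\mathcal{APR}_p(M,g)$ is nonempty (by the Realization of $p$-width together with Proposition~\ref{prop:CPT}), pick $V \in \mathcal{APR}_p(M,g)$. By the definition of $\mathcal{M}_{U_i,p}$ we have $\|V\|(U_i) > 0$, so $\mathrm{spt}(V)$ is a singular minimal hypersurface with optimal regularity meeting $U_i$. Since $\set{U_i}$ is a basis for the topology of $M$, the union of the supports of all singular minimal hypersurfaces with optimal regularity in $(M,g)$ is dense in $M$. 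As this holds for every $g$ in the residual set $\mathcal{M}$, we obtain the asserted genericity.

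The argument is essentially bookkeeping once Conjecture~\ref{conj:empty_interior} is granted, and I do not expect a genuine obstacle: the only point requiring care is verifying that the perturbation step in Lemma~\ref{lemma:KEY} really does go through verbatim with $\mathcal{C}(g')$ merely having empty interior rather than being countable — which is precisely the content of the Remark after the Main Theorem — and noting that $\Gamma_\infty(M)$ with the $C^\infty$ topology is a Baire space, so that ``residual'' carries the desired meaning. The true difficulty of the problem is, of course, entirely contained in the conjecture itself, i.e.\ in producing a usable substitute for bumpy metrics in the singular setting.
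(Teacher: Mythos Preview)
Your proposal is correct and follows essentially the same approach as the paper: the paper does not give a separate proof for this corollary, relying instead on the preceding Remark, which records exactly the observation you use (that the Key Lemma only needs $\mathcal{C}$ to have empty interior, so $\mathcal{M}$ is residual in $\mathcal{O}_{ei}$), together with the trivial consequence of the conjecture that $\mathcal{O}_{ei} = \Gamma_\infty(M)$. Your write-up simply unpacks this implicit argument in full detail.
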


    In particular, let $\mathcal{M}_c$ be the set of the metrics where there are only countably many singular minimal hypersurfaces of optimal regularity w.r.t. that metric and then the following conjecture would imply Conjecture \ref{conj:empty_interior}.

    \begin{conjecture}\label{conj:countable}
        $\mathcal{M}_c$ is dense in $\Gamma_\infty(M)$. 
    \end{conjecture}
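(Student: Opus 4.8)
The plan is to transplant White's bumpy-metric theorem to the singular setting: I would try to show that for a $C^\infty$-generic (hence dense) set of metrics every closed singular minimal hypersurface with optimal regularity is \emph{non-degenerate}, and then argue that non-degenerate such hypersurfaces form a discrete, hence countable, subset of the space of integral varifolds. This would give a dense subset of $\Gamma_\infty(M)$ contained in $\mathcal{M}_c$, proving Conjecture~\ref{conj:countable} (and, via the remark preceding it, also Conjecture~\ref{conj:empty_interior} and the density of singular minimal hypersurfaces for generic metrics).

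First I would set up the functional-analytic framework. Fix an optimally regular singular minimal hypersurface $\Sigma$ for a metric $g_0$; its regular part is an embedded minimal hypersurface whose singular set has dimension $\le n-7$. Because $\mathrm{Sing}(\Sigma)$ has codimension at least $7$, one expects removable-singularity and Hardy-type estimates to allow normal graphs over $\Sigma$ to be analyzed in weighted H\"older spaces on which the linearized operator --- a zero-order perturbation of the Jacobi operator $L_\Sigma = \Delta_\Sigma + |A_\Sigma|^2 + \mathrm{Ric}_{g}(\nu,\nu)$ --- is Fredholm of index zero. The goal here is to exhibit the set of pairs $(g,\Sigma)$ as a Banach manifold $\mathcal{N}$ with a Fredholm projection $\pi\colon\mathcal{N}\to\Gamma_\infty(M)$, a singular analogue of White's structure theorem.

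Next I would run a Sard--Smale argument: metric deformations are rich enough to realize any prescribed first-order normal motion of $\Sigma$, so the critical points of $\pi$ correspond exactly to $\Sigma$ with $\ker L_\Sigma\neq 0$, and Sard--Smale then makes the set of metrics with no degenerate optimally regular singular minimal hypersurface a countable intersection of open dense sets, hence dense. For such a metric $g$ one shows a non-degenerate $\Sigma$ is isolated among optimally regular singular minimal hypersurfaces: a sequence $\Sigma_j\rightharpoonup\Sigma$ would converge graphically on the regular part (Allard, Schoen--Simon), and rescaling the normal graphs would produce a nontrivial Jacobi field of $L_\Sigma$, a contradiction --- provided one can rule out jumps in multiplicity and enlargement of the singular set of the limit, for which the high codimension of $\mathrm{Sing}$ together with monotonicity should suffice. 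Since integral $n$-varifolds on $M$ are separable and metrizable on mass-bounded subsets, a set of isolated points is countable (intersect with $\{\|V\|(M)\le\Lambda\}$ over $\Lambda\in\mathbb{N}$ to get a countable union of finite sets), so $g\in\mathcal{M}_c$.

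The hard part will be the compactness underlying the last step, together with the Banach-manifold structure in the first. Without a priori Morse-index or stability bounds --- whose absence in the higher-dimensional singular category is precisely what forced this paper to work with $\mathcal{M}_f$ rather than with bumpy metrics, and why Proposition~\ref{prop:CPT0} is available only under a stability hypothesis --- a bounded-area sequence of optimally regular singular minimal hypersurfaces need not subconverge to an optimally regular one, and near $\mathrm{Sing}(\Sigma)$ the convergence is uncontrolled, so the step ``graphical convergence $\Rightarrow$ Jacobi field'' can fail exactly where the geometry is worst; relatedly, the Jacobi operator on a singular minimal hypersurface is not known to be Fredholm and the space of such hypersurfaces is not known to be a Banach manifold. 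Establishing that implicit-function-theorem infrastructure in the optimally regular singular category is, I expect, the real crux, and would very likely suffice for the conjecture.
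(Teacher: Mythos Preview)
The statement you are addressing is \emph{Conjecture~\ref{conj:countable}}, and the paper does not prove it; it is stated precisely as an open problem, with the only accompanying comment being that it would morally yield Morse-index upper bounds via the techniques of \cite{marques_morse_2016}. There is therefore no proof in the paper to compare your proposal against.

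Your proposal is not a proof but a research outline, and you are candid about this. The program you sketch --- building a singular analogue of White's bumpy-metric/structure theorem, running Sard--Smale on a Fredholm projection, and then using non-degeneracy to get discreteness --- is indeed the natural line of attack, and the obstacles you isolate are exactly the ones that keep the conjecture open. In particular: (i) it is not known that the Jacobi operator on an optimally regular singular minimal hypersurface is Fredholm of index zero in any reasonable weighted space, so the Banach-manifold structure for pairs $(g,\Sigma)$ is unavailable; (ii) without index/stability control a bounded-area sequence of such hypersurfaces need not subconverge to one of the same type, so the ``graphical convergence $\Rightarrow$ Jacobi field'' step can fail near the singular set; and (iii) ruling out multiplicity jumps and singular-set enlargement in the limit is itself a hard problem in this regime. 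Each of these is a genuine gap, not a technicality, and the paper's entire point in introducing $\mathcal{M}_f$ and property~$(m)$ is to circumvent precisely this missing infrastructure. So your write-up is a fair description of what one would \emph{want} to do, but it should be labeled as a heuristic strategy for an open conjecture rather than as a proof proposal.
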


    Morally speaking, Conjecture \ref{conj:countable} can even lead to upper bounds of Morse index following the techniques in \cite{marques_morse_2016}.
\bibliography{reference}

\begin{thebibliography}{LMN18}

\bibitem[ACS17]{ambrozio_compactness_2017-1}
L.~Ambrozio, A.~Carlotto, and B.~Sharp.
\newblock Compactness analysis for free boundary minimal hypersurfaces.
\newblock {\em Calculus of Variations and Partial Differential Equations},
  57(1):22, December 2017.

\bibitem[All72]{allard_first_1972}
W.~K. Allard.
\newblock On the {{First Variation}} of a {{Varifold}}.
\newblock {\em Annals of Mathematics}, 95(3):417--491, 1972.

\bibitem[Alm62]{almgren_homotopy_1962}
F.~J. Almgren.
\newblock {\em The Homotopy Groups of the Integral Cycle Groups.}
\newblock PhD thesis, 1962.
\newblock OCLC: 22016723.

\bibitem[Bir17]{birkhoff_dynamical_1917}
G.~D. Birkhoff.
\newblock Dynamical systems with two degrees of freedom.
\newblock {\em Transactions of the American Mathematical Society},
  18(2):199--300, 1917.

\bibitem[BP02]{buchstaber_torus_2002}
V.~M. Buchstaber and T.~E. Panov.
\newblock {\em Torus Actions and Their Applications in Topology and
  Combinatorics}.
\newblock Number v. 24 in University Lecture Series. {American Mathematical
  Society}, Providence, R.I, 2002.

\bibitem[CM20]{chodosh_minimal_2018}
O.~Chodosh and C.~Mantoulidis.
\newblock Minimal surfaces and the {{Allen}}\textendash{{Cahn}} equation on
  3-manifolds: Index, multiplicity, and curvature estimates.
\newblock {\em Annals of Mathematics}, 191(1):213--328, 2020.

\bibitem[Dey19]{dey_compactness_2019}
A.~Dey.
\newblock Compactness of certain class of singular minimal hypersurfaces.
\newblock {\em arXiv:1901.05840 [math]}, January 2019.

\bibitem[Fed96]{federer_geometric_1996}
H.~Federer.
\newblock {\em Geometric Measure Theory}.
\newblock Classics in Mathematics. {Springer}, Berlin ; New York, 1996.

\bibitem[GG19]{gaspar_weyl_2019}
P.~Gaspar and M.~A.~M. Guaraco.
\newblock The {{Weyl Law}} for the phase transition spectrum and density of
  limit interfaces.
\newblock {\em Geometric and Functional Analysis}, 29(2):382--410, April 2019.

\bibitem[Gro03]{gromov_isoperimetry_2003}
M.~Gromov.
\newblock Isoperimetry of waists and concentration of maps.
\newblock {\em Geometric \& Functional Analysis GAFA}, 13(1):178--215, February
  2003.

\bibitem[Gut09]{guth_minimax_2009}
L.~Guth.
\newblock Minimax {{Problems Related}} to {{Cup Powers}} and {{Steenrod
  Squares}}.
\newblock {\em Geometric and Functional Analysis}, 18(6):1917--1987, March
  2009.

\bibitem[IMN18]{irie_density_2018}
K.~Irie, F.~C. Marques, and A.~Neves.
\newblock Density of minimal hypersurfaces for generic metrics.
\newblock {\em Annals of Mathematics}, 187(3):963--972, May 2018.

\bibitem[LMN18]{liokumovich_weyl_2018}
Y.~Liokumovich, F.~C. Marques, and A.~Neves.
\newblock Weyl law for the volume spectrum.
\newblock {\em Annals of Mathematics}, 187(3):933--961, May 2018.

\bibitem[MN14]{marques_min-max_2014}
F.~C. Marques and A.~Neves.
\newblock The {{Min}}-{{Max}} theory and the {{Willmore}} conjecture.
\newblock {\em Annals of Mathematics}, 179(2):683--782, March 2014.

\bibitem[MN16a]{marques_morse_2016}
F.~C. Marques and A.~Neves.
\newblock Morse index and multiplicity of min-max minimal hypersurfaces.
\newblock {\em Cambridge Journal of Mathematics}, 4(4):463--511, 2016.

\bibitem[MN16b]{marques_topology_2016}
F.~C. Marques and A.~Neves.
\newblock Topology of the space of cycles and existence of minimal varieties.
\newblock {\em Surveys in Differential Geometry}, 21(1):165--177, 2016.

\bibitem[MN17]{marques_existence_2017}
F.~C. Marques and A.~Neves.
\newblock Existence of infinitely many minimal hypersurfaces in positive
  {{Ricci}} curvature.
\newblock {\em Inventiones mathematicae}, 209(2):577--616, August 2017.

\bibitem[MN21]{marques_morse_2018-1}
F.~C. Marques and A.~Neves.
\newblock Morse index of multiplicity one min-max minimal hypersurfaces.
\newblock {\em Advances in Mathematics}, 378:107527, February 2021.

\bibitem[MNS19]{marques_equidistribution_2019}
F.~C. Marques, A.~Neves, and A.~Song.
\newblock Equidistribution of minimal hypersurfaces for generic metrics.
\newblock {\em Inventiones mathematicae}, 216(2):421--443, May 2019.

\bibitem[Pit81]{pitts_existence_1981}
J.~T. Pitts.
\newblock {\em Existence and {{Regularity}} of {{Minimal Surfaces}} on
  {{Riemannian Manifolds}}}.
\newblock {Princeton University Press}, 1981.

\bibitem[RS82]{rourke_introduction_1982}
C.~P. Rourke and B.~J. Sanderson.
\newblock {\em Introduction to Piecewise-Linear Topology}.
\newblock Springer Study Edition. {Springer}, {Berlin}, rev. printing edition,
  1982.
\newblock OCLC: 7948164.

\bibitem[Sha17]{sharp_compactness_2017}
B.~Sharp.
\newblock Compactness of minimal hypersurfaces with bounded index.
\newblock {\em Journal of Differential Geometry}, 106(2):317--339, June 2017.

\bibitem[Sim84]{simon_lectures_1984}
L.~Simon.
\newblock {\em Lectures on Geometric Measure Theory}.
\newblock Number~3 in Proceedings of the {{Centre}} for {{Mathematical
  Analysis}} / {{Australian National University}}. {Centre for Mathematical
  Analysis, Australian National University}, Canberra, 1984.
\newblock OCLC: 12264914.

\bibitem[Son18]{song_existence_2018}
A.~Song.
\newblock Existence of infinitely many minimal hypersurfaces in closed
  manifolds.
\newblock {\em arXiv:1806.08816 [math]}, June 2018.

\bibitem[SS81]{schoen_regularity_1981}
R.~Schoen and L.~Simon.
\newblock Regularity of stable minimal hypersurfaces.
\newblock {\em Communications on Pure and Applied Mathematics}, 34(6):741--797,
  November 1981.

\bibitem[Whi91]{white_space_1991}
B.~White.
\newblock The {{Space}} of {{Minimal Submanifolds}} for {{Varying Riemannian
  Metrics}}.
\newblock {\em Indiana University Mathematics Journal}, 40(1):161--200, 1991.

\bibitem[Whi17]{white_bumpy_2017}
B.~White.
\newblock On the bumpy metrics theorem for minimal submanifolds.
\newblock {\em American Journal of Mathematics}, 139(4):1149--1155, 2017.

\bibitem[Wic14]{wickramasekera_general_2014}
N.~Wickramasekera.
\newblock A general regularity theory for stable codimension 1 integral
  varifolds.
\newblock {\em Annals of Mathematics}, 179(3):843--1007, May 2014.

\bibitem[Xu18]{xu_p;m-width_2016}
G.~Xu.
\newblock The (p;m)-width of {{Riemannian}} manifolds and its realization.
\newblock {\em Indiana University Mathematics Journal}, 67(3):999--1023, 2018.

\bibitem[Yau82]{10.2307/j.ctt1bd6kkq.37}
S.-T. Yau.
\newblock Problem section.
\newblock In {\em Seminar on Differential Geometry. (AM-102)}, pages 669--706.
  Princeton University Press, 1982.

\bibitem[Zho20]{zhou_multiplicity_2019}
X.~Zhou.
\newblock On the {{Multiplicity One Conjecture}} in min-max theory.
\newblock {\em Annals of Mathematics}, 192(3):767--820, 2020.

\bibitem[ZZ20]{zhou_existence_2018}
X.~Zhou and J.~J. Zhu.
\newblock Existence of hypersurfaces with prescribed mean curvature {{I}}
  \textendash{} generic min-max.
\newblock {\em Cambridge Journal of Mathematics}, 8(2):311--362, 2020.

\end{thebibliography}
\end{document}